\author{A.A. Vasil'eva\footnote{Lomonosov Moscow State University, Moscow Center for Fundamental and Applied Mathematics}}
\title{Linear widths of weighted Sobolev classes with conditions on the highest order and zero derivatives\footnote{The research was carried out with the
financial support of the Russian Foundation for Basic Research
(grant no. 19-01-00332).}}
\date{}
\begin{document}

\maketitle

\newenvironment{Biblio}{%
                  \renewcommand{\refname}{\footnotesize REFERENCES}%
                  }

\def\inff{\mathop{\smash\inf\vphantom\sup}}
\renewcommand{\le}{\leqslant}
\renewcommand{\ge}{\geqslant}
\newcommand{\sgn}{\mathrm {sgn}\,}
\newcommand{\inter}{\mathrm {int}\,}
\newcommand{\dist}{\mathrm {dist}}
\newcommand{\supp}{\mathrm {supp}\,}
\newcommand{\R}{\mathbb{R}}
\newcommand{\Z}{\mathbb{Z}}
\newcommand{\N}{\mathbb{N}}
\newcommand{\Q}{\mathbb{Q}}
\theoremstyle{plain}
\newtheorem{Trm}{Theorem}
\newtheorem{trma}{Theorem}
\newtheorem{Def}{Definition}
\newtheorem{Cor}{Corollary}
\newtheorem{Lem}{Lemma}
\newtheorem{Rem}{Remark}
\newtheorem{Sta}{Proposition}
\newtheorem{Sup}{Assumption}
\newtheorem{Supp}{Assumption}
\newtheorem{Exa}{Example}
\renewcommand{\proofname}{\bf Proof}
\renewcommand{\thetrma}{\Alph{trma}}
\renewcommand{\theSupp}{\Alph{Supp}}

\begin{abstract}
In this paper order estimates for the linear widths of some
function classes are obtained; these classes are defined by
restrictions on the weighted $L_{p_1}$-norm of the $r$th
derivative and the weighted $L_{p_0}$-norm of zero derivative.
\end{abstract}

Keywords: weighted Sobolev classes, function class intersections,
linear widths

\section{Introduction}

In \cite{vas_inters} order estimates for the Kolmogorov widths of
some weighted Sobolev classes with conditions on the highest order
and zero derivatives in a weighted Lebesgue space were obtained.
These classes are defined as
\begin{align}
\label{m_def} M = \left\{ f:\Omega \rightarrow \R:\; \left\|
\frac{\nabla ^r f}{g}\right\|_{L_{p_1}(\Omega)}\le 1, \quad
\|wf\|_{L_{p_0}(\Omega)}\le 1\right\}
\end{align}
(here $\Omega\subset \R^d$ is a domain, $r\in \N$, $1<p_0, \,
p_1\le \infty$, $g$, $w:\Omega \rightarrow (0, \, \infty)$ are
measurable functions). The weighted Lebesgue space is defined as
$$
L_{q,v}(\Omega)=\left\{f:\Omega \rightarrow \R| \; \ \| f\|
_{L_{q,v}(\Omega)}<\infty\right\}, \quad \text{where}\quad \|
f\|_{L_{q,v}(\Omega)}{=}\| fv\|_{L_q(\Omega)};
$$
here $1\le q<\infty$, $v:\Omega \rightarrow (0, \, \infty)$ is a
measurable function. Three examples were considered. In the first
two of them, $\Omega$ is a bounded John domain, the weights are
functions of the distance from an $h$-set $\Gamma \subset
\partial \Omega$. In the third example, $\Omega = \R^d$, the
weights have the form
\begin{align}
\label{gw_2} g(x)=(1+|x|)^\beta, \quad w(x) = (1+|x|)^\sigma,
\quad v(x) = (1 + |x|)^\lambda.
\end{align}
In this paper we obtain the estimates for the linear widths of a
set $M$ in the space $L_{q,v}(\Omega)$.

The problem on estimating the Kolmogorov and linear widths of
weighted Sobolev classes with different constraints on the
derivatives was studied in \cite{triebel12, mieth1, mieth2,
vas_width_raspr, lo1, lo2, lo3, triebel, trieb_mat_sb, boy_1,
myn_otel, ait_kus1, ait_kus2}. For details, see \cite{vas_inters}.

We give the necessary definitions.

Let $X$ be a normed space. By $L(X, \, X)$ we denote the family of
linear continuous operators on $X$, by ${\rm rk}\, A$, the
dimension of the range of an operator $A$. Let $C\subset X$, $n\in
\Z_+$. The linear $n$-width of a set $C$ in the space $X$ is
defined as
$$
\lambda_n(C, \, X) = \inf _{A\in L(X, \, X), \, {\rm rk}\, A\le n}
\sup _{x\in C}\|x-Ax\|.
$$

For $1\le p\le \infty$, we write $p'=\frac{p}{p-1}$.

\begin{Def}
\label{theta_j} Given $s_*$, $\tilde \theta$, $\hat \theta\in \R$,
we define the numbers $j_0\in \N$ and $\theta_j\in \R$ $(1\le j\le
j_0)$ as follows.
\begin{enumerate}
\item Let $p_0\ge q$, $p_1\ge q$. Then $j_0=2$, $\theta_1=s_*$, $\theta_2 = \tilde
\theta$.
\item Let $p_0>q$, and, in addition, $p_1<q\le 2$ or $2\le p_1<q$. Then $j_0=3$,
$\theta_1=s_*+\frac 1q-\frac{1}{p_1}$, $\theta_2 = \tilde \theta$,
$\theta_3 =\hat \theta$.
\item Let $p_0\le q$, $p_1\le q$, and, in addition, $q\le 2$ or $\min \{p_0, \, p_1\}\ge 2$.
Then $j_0=2$, $\theta_1= s_*+\frac 1q-\frac {1}{p_1}$, $\theta_2 =
\hat \theta$.
\item Let $p_1>q$, and, in addition, $p_0<q\le 2$ or $2\le p_0<q$. Then $j_0=3$, $\theta_1 = s_*$,
$\theta_2=\tilde \theta$, $\theta_3 =\hat \theta$.
\item Let $p_0\le 2<q$, $p_1\le 2<q$, $\frac{1}{p_0}+\frac 1q\ge
1$, $\frac{1}{p_1}+\frac 1q\ge 1$. Then $j_0=4$, $\theta_1 =s_* +
\frac 12 -\frac {1}{p_1}$, $\theta_2 =\frac{q(s_*+1/q-1/p_1)}{2}$,
$\theta_3 =\hat \theta +\frac 12 -\frac 1q$, $\theta_4
=\frac{q\hat \theta}{2}$.
\item Let $p_0< p_1\le 2<q$, $\frac{1}{p_0}+\frac 1q\le
1$, $\frac{1}{p_1}+\frac 1q\le 1$.
\begin{enumerate}
\item If $p_0<p_1<2$, then $j_0=5$, $\theta_1 = s_*+\frac 1q-\frac 12$,
$\theta_2=\frac{p_1'(s_*+1/q-1/p_1)}{2}$, $\theta_3 = \hat \theta
+\left(\frac{1}{p_0}-\frac{1}{p_1}\right)\left(1-\frac{\tilde
\theta}{s_*}\right)+\frac{1}{p_1}-\frac 12$, $\theta_4=
\frac{p_1'}{2}\left(\hat \theta
+\left(\frac{1}{p_0}-\frac{1}{p_1}\right)\left(1-\frac{\tilde
\theta}{s_*}\right)\right)$, $\theta_5 =
\frac{p_0'\hat\theta}{2}$.
\item If $p_0<p_1=2$, then $j_0=3$, $\theta_1 = s_*+\frac 1q-\frac
12$, $\theta_2 = \hat \theta
+\left(\frac{1}{p_0}-\frac{1}{2}\right)\left(1-\frac{\tilde
\theta}{s_*}\right)$, $\theta_3 = \frac{p_0'\hat\theta}{2}$.
\end{enumerate}
\item Let $p_1< p_0\le 2<q$, $\frac{1}{p_0}+\frac 1q\le
1$, $\frac{1}{p_1}+\frac 1q\le 1$.
\begin{enumerate}
\item If $p_1<p_0<2$, then $j_0=5$, $\theta_1 = s_*+\frac 1q-\frac 12$,
$\theta_2=\frac{p_1'(s_*+1/q-1/p_1)}{2}$, $\theta_3 = \hat \theta
+\left(\frac{1}{p_1}-\frac{1}{p_0}\right)\frac{\tilde
\theta}{s_*}+\frac{1}{p_0}-\frac12$, $\theta_4=
\frac{p_0'}{2}\left(\hat \theta
+\left(\frac{1}{p_1}-\frac{1}{p_0}\right)\frac{\tilde
\theta}{s_*}\right)$, $\theta_5 = \frac{p_1'\hat\theta}{2}$.
\item If $p_1<p_0=2$, then $j_0=4$, $\theta_1 = s_*+\frac 1q-\frac 12$,
$\theta_2=\frac{p_1'(s_*+1/q-1/p_1)}{2}$, $\theta_3 = \hat \theta
+\left(\frac{1}{p_1}-\frac{1}{2}\right)\frac{\tilde \theta}{s_*}$,
$\theta_4 = \frac{p_1'\hat\theta}{2}$.
\end{enumerate}
\item Let $p:=p_1= p_0< 2<q$, $\frac{1}{p}+\frac 1q\le 1$. Then $j_0=4$, $\theta_1 =
s_*+\frac 1q-\frac 12$, $\theta_2=\frac{p'(s_*+1/q-1/p)}{2}$,
$\theta_3 = \hat \theta +\frac{1}{p}-\frac 12$, $\theta_4 =
\frac{p'\hat\theta}{2}$.
\item Let $p_0< 2<q$, $p_1\le 2<q$, $\frac{1}{p_0}+\frac
1q> 1$, $\frac{1}{p_1}+\frac 1q< 1$.
\begin{enumerate}
\item If $p_1<2$, then $j_0=6$, $\theta_1=s_*+\frac 1q-\frac 12$, $\theta_2=
\frac{p_1'(s_*+1/q-1/p_1)}{2}$, $\theta_3=\hat \theta
+\left(\frac{1}{p_0}-\frac{1}{p_1}\right)\left(1-\frac{\tilde
\theta}{s_*}\right)+\frac{1}{p_1}-\frac 12$, $\theta_4=
\frac{p_1'}{2}\left(\hat \theta
+\left(\frac{1}{p_0}-\frac{1}{p_1}\right)\left(1-\frac{\tilde
\theta}{s_*}\right)\right)$, $\theta_5= \hat \theta +\frac
12-\frac 1q$, $\theta_6=\frac{q\hat \theta}{2}$.
\item If $p_1=2$, then $j_0=4$, $\theta_1=s_*+\frac 1q-\frac 12$, $\theta_2=\hat \theta
+\left(\frac{1}{p_0}-\frac{1}{2}\right)\left(1-\frac{\tilde
\theta}{s_*}\right)$, $\theta_3= \hat \theta +\frac 12-\frac 1q$,
$\theta_4=\frac{q\hat \theta}{2}$.
\end{enumerate}
\item Let $p_0\le 2<q$, $p_1< 2<q$, $\frac{1}{p_0}+\frac
1q< 1$, $\frac{1}{p_1}+\frac 1q> 1$.
\begin{enumerate}
\item If $p_0<2$, then $j_0=6$, $\theta_1 =s_* + \frac 12 -\frac {1}{p_1}$,
$\theta_2 =\frac{q(s_*+1/q-1/p_1)}{2}$, $\theta_3 = \hat \theta
+\left(\frac{1}{p_1}-\frac{1}{p_0}\right)\frac{\tilde
\theta}{s_*}+\frac{1}{p_0}-\frac12$, $\theta_4=
\frac{p_0'}{2}\left(\hat \theta
+\left(\frac{1}{p_1}-\frac{1}{p_0}\right)\frac{\tilde
\theta}{s_*}\right)$, $\theta_5=\hat \theta +\frac 12-\frac 1q$,
$\theta_6=\frac{q\hat \theta}{2}$.
\item If $p_0=2$, then $j_0=5$, $\theta_1 =s_* + \frac 12 -\frac {1}{p_1}$,
$\theta_2 =\frac{q(s_*+1/q-1/p_1)}{2}$, $\theta_3 = \hat \theta
+\left(\frac{1}{p_1}-\frac{1}{2}\right)\frac{\tilde \theta}{s_*}$,
$\theta_4=\hat \theta +\frac 12-\frac 1q$, $\theta_5=\frac{q\hat
\theta}{2}$.
\end{enumerate}
\item Let $2<p_0<q$, $p_1<2$, $\frac{1}{p_1}+\frac 1q\le 1$. Then
$j_0=4$, $\theta_1=s_*+\frac 1q-\frac 12$, $\theta_2 =
\frac{p_1'(s_*+1/q-1/p_1)}{2}$, $\theta_3 = \hat \theta
+\left(\frac{1}{p_1}-\frac 12\right)\frac{\tilde \theta}{s_*}$,
$\theta_4 = \frac{p_1'\hat \theta}{2}$.
\item Let $2<p_1<q$, $p_0<2$, $\frac{1}{p_0}+\frac 1q\le 1$. Then
$j_0=3$, $\theta_1 = s_*+\frac 1q-\frac{1}{p_1}$, $\theta_2 = \hat
\theta+\left(\frac{1}{p_0}-\frac 12\right)\left(1-\frac{\tilde
\theta}{s_*}\right)$, $\theta_3= \frac{p_0'\hat \theta}{2}$.
\end{enumerate}
\end{Def}

Notice that here for $q>2$ the following cases are not considered:
a) $p_0>q$, $p_1\le 2$, b) $p_1>q$, $p_0\le 2$, c) $p_0>2$,
$\frac{1}{p_1}+\frac{1}{q}>1$, d) $p_1>2$,
$\frac{1}{p_0}+\frac{1}{q}>1$.

We formulate the theorem about order estimates of the linear
widths for the third example from \cite{vas_inters}.

\begin{Trm}
\label{main3} Let $\Omega=\R^d$, $r\in \N$, $1<p_0, \, p_1\le
\infty$, $1<q<\infty$; suppose that $p_0$, $p_1$, $q$ satisfy one
of the conditions from Definition \ref{theta_j}. Let (\ref{gw_2})
hold, $\frac rd+\min\left\{\frac 1q, \,
\frac{1}{p_0}\right\}-\frac{1}{p_1}>0$, $\min\{\beta+\sigma
+d/p_0-d/p_1, \, \beta + \sigma\}>0$. We set $s_*=\frac rd$,
$$
\tilde \theta = \frac rd \cdot \frac{\sigma -\lambda +
\frac{d}{p_0}-\frac{d}{q}}{\beta +\sigma +r+\frac{d}{p_0} -
\frac{d}{p_1}},
$$
$$
\hat \theta = \frac{\sigma\left(\frac{r}{d}+\frac 1q-
\frac{1}{p_1}\right) +\beta\left(\frac 1q- \frac{1}{p_0}\right)
-\lambda \left(\frac rd +\frac{1}{p_0}
-\frac{1}{p_1}\right)}{\beta +\sigma +r+\frac{d}{p_0} -
\frac{d}{p_1}},
$$
$\mathfrak{Z}=(r, \, d, \, p_0, \, p_1, \, q, \, \beta, \, \sigma,
\, \lambda)$. Suppose that $\tilde \theta>0$ for $p_0\ge q$,
$\hat\theta>0$ for $p_0<q$. Let the set $M$ be defined by
(\ref{m_def}), and let $j_0\in \N$ and $\theta_j$ be defined
according to Definition \ref{theta_j}. Suppose that there is
$j_*\in \{1, \, \dots, \, j_0\}$ such that $\theta_{j_*}< \min
_{j\ne j_*} \theta_j$. Then
$$
\lambda_n(M, \, L_{q,v}(\R^d)) \underset{\mathfrak{Z}}{\asymp}
n^{-\theta_{j_*}}.
$$
\end{Trm}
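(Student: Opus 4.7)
The argument will follow the strategy of \cite{vas_inters} for Kolmogorov widths, replacing the finite-dimensional Kolmogorov-width input by the corresponding results for linear widths of (intersections of) finite-dimensional balls.

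First I would discretize. Decompose $\R^d = \bigsqcup_{s\ge 0}\Omega_s$ into dyadic annuli $\Omega_s=\{x: 2^{s-1}<1+|x|\le 2^s\}$. On each $\Omega_s$ the weights $g$, $w$, $v$ are equivalent to the constants $2^{s\beta}$, $2^{s\sigma}$, $2^{s\lambda}$, so after rescaling, the local problem on $\Omega_s$ reduces to an unweighted Sobolev problem on a fixed annulus. A further cube subdivision of $\Omega_s$ combined with a standard piecewise-polynomial / wavelet basis replaces the local Sobolev norm by an $\ell_{p_1}^{N_s}$-norm on the coefficients of the highest derivatives and the zero-derivative norm by an $\ell_{p_0}^{N_s}$-norm, with $N_s\asymp 2^{sd}$; the target norm $L_{q,v}$ becomes a weighted $\ell_q$ sum of local $\ell_q^{N_s}$-norms. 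Thus $\lambda_n(M,L_{q,v}(\R^d))$ is equivalent (up to constants depending on $\mathfrak{Z}$) to the linear width of an intersection of two weighted direct sums of finite-dimensional balls inside a weighted $\ell_q$ sum.

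Next I would treat the upper bound by allocating the budget across scales. Choose numbers $n_s\in\Z_+$ with $\sum_s n_s\le n$ and on scale $s$ apply the best known linear-width estimate for $\lambda_{n_s}\!\bigl(B^{N_s}_{p_1}\cap c_s B^{N_s}_{p_0},\,\ell_q^{N_s}\bigr)$, where $c_s$ encodes the ratio of the two weighted ball radii at scale $s$. The classical linear-width formulas for a single $\ell_p$-ball (Pietsch, Kashin, Gluskin, Garnaev--Gluskin) together with the more recent bounds for intersections produce different exponents according to whether $p_0, p_1$ lie above or below $2$ and whether $1/p_i+1/q$ lies above or below $1$ --- precisely the dichotomies that generate the twelve cases of Definition~\ref{theta_j}. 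Optimizing $\{n_s\}$ under $\sum n_s\le n$ and minimizing over which finite-dimensional bound dominates at each scale yields the exponent $\theta_{j_*}=\min_j\theta_j$.

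For the lower bound I would localize: for each candidate $j\in\{1,\dots,j_0\}$, restrict $M$ to functions supported in a single annulus $\Omega_{s(n,j)}$, where $s(n,j)$ is chosen so that the rescaled finite-dimensional problem saturates the bound $n^{-\theta_j}$. The matching lower bound for $\lambda_n\!\bigl(B^{N}_{p_1}\cap cB^{N}_{p_0},\,\ell_q^{N}\bigr)$ then transfers back to $\lambda_n(M,L_{q,v}(\R^d))\gtrsim n^{-\theta_j}$; taking the maximum over $j$ and using the hypothesis $\theta_{j_*}<\min_{j\ne j_*}\theta_j$ shows that the worst scale delivers exactly $\theta_{j_*}$, matching the upper bound.

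The chief obstacle will be the bookkeeping: verifying, in each of the twelve cases of Definition~\ref{theta_j}, that the scale-allocation optimization combined with the appropriate finite-dimensional formula reproduces exactly $\theta_{j_*}$, and that the side conditions (positivity of $\tilde\theta$ or $\hat\theta$, the sign of $\frac{1}{p_i}+\frac{1}{q}-1$, and the placement of $p_0, p_1$ relative to $2$ and $q$) are consistent with what Definition~\ref{theta_j} encodes. The most delicate input is the finite-dimensional linear-width estimate for the \emph{intersection} $B^N_{p_1}\cap cB^N_{p_0}$ in $\ell_q^N$: unlike the Kolmogorov case treated in \cite{vas_inters}, the absence of transposition symmetry for linear widths forces one either to invoke duality with the Gelfand widths or to construct the near-optimal operators directly via a truncated frame expansion together with a Kashin-type decomposition --- and this is where the case split between $p_0<p_1$, $p_0=p_1$, and $p_0>p_1$ (items 6--8 of Definition~\ref{theta_j}) really comes from.
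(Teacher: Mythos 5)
Your overall architecture --- dyadic annuli with frozen weights, reduction to finite\--dimensional $\ell_p$\--balls via local polynomial/wavelet bases, allocation of the rank budget across scales for the upper bound, and localization to a critical scale for the lower bound --- is essentially the scheme the paper inherits from \cite{vas_inters}, with the discretization indexed by two parameters $(t,m)$ (annulus and resolution level within it) rather than your single $s$. The genuine divergence, and the place where your plan has a gap, is the step you yourself flag as ``the most delicate input'': you propose to feed the optimization a two\--sided estimate for $\lambda_n\bigl(B^N_{p_1}\cap cB^N_{p_0},\,\ell_q^N\bigr)$, obtained either from ``more recent bounds for intersections,'' from Gelfand\--width duality, or from a Kashin\--type construction. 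No such off\--the\--shelf theorem exists in the generality you need, and the paper never proves or uses one. Instead it covers the $(t,m)$\--parameter plane by regions on each of which the intersection $W_{t,m}$ is \emph{replaced by a dilate of a single ball}: $B_{p_1}$ where the derivative constraint dominates, $B_{p_0}$ where the zero\--order constraint dominates, and --- crucially for the mixed cases 9 and 10 of Definition \ref{theta_j}, where $1/p_0+1/q$ and $1/p_1+1/q$ straddle $1$ --- the H\"older/Galeev\--interpolated ball $B_{q'}^{\nu_{t,m}}$ from (\ref{wkm_lq}); only Gluskin's single\--ball theorem and the Pietsch--Stesin formula are then applied. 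Likewise the lower bounds come from inscribing a single ball (the smaller of the two at a balancing level $m_t$ or $\tilde m_t$, or a Euclidean ball $k B_2^{\nu}$) into the intersection, again reducing to single\--ball lower bounds. So your attribution of the case split in items 6--8 to an intersection\--width theorem is misplaced: the twelve cases arise from the $\max\{1/q,1/p'\}$ in Gluskin's formula applied separately to each of the three candidate balls, together with the thresholds $N\asymp n^{q/2}, n^{p_0'/2}, n^{p_1'/2}$ at which $n^{-1/2}N^{\max\{1/q,1/p'\}}$ saturates at $1$ --- thresholds your plan does not mention but which generate all the exponents of the form $\tfrac{q\hat\theta}{2}$ and $\tfrac{p_i'(\cdot)}{2}$. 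As written, your key step is not executable; replacing it by the single\--ball\--inclusion device (and adding the interpolated $\ell_{q'}$ inclusion for the mixed cases) would make the plan match the paper.
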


This theorem generalizes and refines the result from
\cite{myn_otel} (p. 165, Theorem 9, except the case $n=\delta_2$).

For the first and the second example from \cite{vas_inters}, the
theorem can be formulated similarly; the number $\tilde\theta$ is
defined, respectively, by \cite[formulas (5) and
(11)]{vas_inters}; the number $\hat \theta$ is defined,
respectively, by \cite[formulas (6) and (12)]{vas_inters}.

The paper is organized as follows. In \S 2 upper estimates for the
linear widths of the function classes $BX_{p_1}(\Omega)\cap
BX_{p_0}(\Omega)$ are obtained (these classes were introduced in
\cite{vas_inters}). In \S 3 the lower estimates are proved. This
together with arguments from \cite[\S 4, 5]{vas_inters} yields
Theorem \ref{main3}, as well as order estimates for the linear
widths for the other two examples from \cite{vas_inters}.

\section{Upper estimates for the linear widths of the classes $BX_{p_1}(\Omega)\cap BX_{p_0}(\Omega)$.}

The spaces $X_{p_i}(\Omega)$ ($i=0, \, 1$) and $Y_q(\Omega)$, the
subspace ${\cal P}(\Omega)$ of dimension $r_0\in \N$, the numbers
$c\ge 1$, $s_*>\left(\frac{1}{p_1} -\frac 1q\right)_+$,
$\gamma_*>0$, $k_*\in \N$, $\alpha_*\in \R$, $\mu_*\in \R$ are as
in \cite[\S 2]{vas_inters}; we suppose that Assumptions A--F from
\cite{vas_inters} hold; in addition, we suppose that the
projections $P_E$ are continuous.

We set
\begin{align}
\label{til_theta} \tilde \theta = \frac{s_*\left(\alpha_*
+\frac{\gamma_*}{p_0} -\frac{\gamma_*}{q}\right)}{\mu_* + \alpha_*
+\gamma_*\left(s_* +\frac{1}{p_0} -\frac{1}{p_1}\right)}, \quad
\hat \theta = \frac{\alpha_*\left(s_* +\frac 1q
-\frac{1}{p_1}\right) +\mu_*\left(\frac 1q
-\frac{1}{p_0}\right)}{\mu_* + \alpha_* +\gamma_*\left(s_*
+\frac{1}{p_0} -\frac{1}{p_1}\right)},
\end{align}
$$
\mathfrak{Z}_0=(p_0, \, p_1, \, q, \, r_0, \, c, \, k_*,\, s_*, \,
\gamma_*, \, \mu_*, \, \alpha_*).
$$
The numbers $j_0\in \N$ and $\theta_j\in \R$ ($1\le j\le j_0$) are
defined according to Definition \ref{theta_j}.

\begin{Trm}
\label{trm} Let
\begin{align}
\label{s1qp} \min \left\{s_*, \, s_* +\frac 1q -\frac{1}{p_1},\;
s_*+\frac{1}{p_0} -\frac{1}{p_1}\right\}>0,
\end{align}
\begin{align}
\label{mua} \min\left\{\mu_* +\alpha_*+\frac{\gamma_*}{p_0}
-\frac{\gamma_*}{p_1}, \, \mu_*+\alpha_* \right\}>0.
\end{align}
Suppose that $\alpha_*> \frac{\gamma_*}{q} -\frac{\gamma_*}{p_0}$
for $p_0 \ge q$, $\alpha_*\left(s_*+\frac 1q-\frac{1}{p_1}\right)>
\mu_*\left(\frac{1}{p_0}-\frac 1q\right)$ for $p_0\le q$, $p_1\le
q$ or for $p_0<q$, $p_1>q$. Let $p_0$, $p_1$, $q$ satisfy one of
the conditions 1--12 from Definition \ref{theta_j}. Suppose that
there exists $j_*\in \{1, \, \dots, \, j_0\}$ such that
$\theta_{j_*}< \min _{j\ne j_*} \theta_j$. Then
$$
\lambda_n(BX_{p_0}(\Omega) \cap BX_{p_1}(\Omega), \, Y_q(\Omega))
\underset{\mathfrak{Z}_0}{\lesssim} n^{-\theta_{j_*}}.
$$
\end{Trm}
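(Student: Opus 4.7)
The plan is to follow the multilevel discretization strategy developed in \cite{vas_inters} for the Kolmogorov widths, replacing the finite-dimensional Kolmogorov width estimates of $\ell_p$-balls $B_p^N$ in $\ell_q^N$ by their linear-width counterparts $\lambda_n(B_p^N, \ell_q^N)$. The class $BX_{p_0}(\Omega)\cap BX_{p_1}(\Omega)$ admits a block decomposition via Assumptions A--F: a function is split into pieces living on finite-dimensional subspaces indexed by a scale parameter $k\in \Z_+$ (block size governed by $k_*$), and the two norms $\|\cdot\|_{X_{p_0}}$, $\|\cdot\|_{X_{p_1}}$ translate, scale by scale, into two weighted $\ell_{p_0}^{N_k}$, $\ell_{p_1}^{N_k}$-constraints, while $\|\cdot\|_{Y_q}$ becomes a weighted $\ell_q^{N_k}$ norm. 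The weights carry the exponents $\gamma_*$, $\alpha_*$, $\mu_*$ that are encoded in the auxiliary quantities $\tilde\theta$, $\hat\theta$ of (\ref{til_theta}).

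On each level the problem then reduces to estimating $\lambda_{n_k}$ of an intersection of two $\ell_{p_i}$-balls in $\ell_q^{N_k}$. For this I would invoke the classical library of linear-width estimates for $B_p^N$ in $\ell_q^N$ (Pietsch, Kashin, Gluskin, Garnaev--Gluskin), whose behaviour in the regime $p<2<q$ splits into several subregimes depending on whether $1/p+1/q\gtrless 1$ and whether $p=2$. This finite-dimensional splitting is exactly the source of cases 5--12 in Definition \ref{theta_j} and of the large number of candidate exponents $\theta_j$. The additional hypothesis that the projections $P_E$ are continuous is used to glue the block-level linear operators, obtained after this reduction, into a single operator $A\in L(Y_q(\Omega),Y_q(\Omega))$ of rank $\sum_k n_k$; this is the decisive difference from the Kolmogorov-width argument, which only needs existence of an approximating subspace scale by scale.

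Then comes the rank-allocation step: distribute $\{n_k\}$ across scales subject to $\sum_k n_k\le n$ and compute, for each admissible combination of regimes, the resulting total error, which takes the form $\sum_k c_k\,2^{-k\tau_{j,k}}$ for various indices $j$. Matching scale to rank in the standard way yields a candidate rate $n^{-\theta_j}$ for each $j$, and the infimum over distributions is governed by the smallest $\theta_j$. The strict-minimum hypothesis $\theta_{j_*}<\min_{j\ne j_*}\theta_j$ cleans the bound of any logarithmic factors, while (\ref{s1qp}), (\ref{mua}) together with the sign conditions on $\alpha_*$ ensure that the relevant geometric series converge and that the allocation parameters are positive.

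The main obstacle will be the case-by-case bookkeeping: each of the twelve cases of Definition \ref{theta_j} corresponds to a different combination of finite-dimensional regimes for the two $\ell_{p_i}$ factors, and the intersection geometry (as opposed to a single ball) requires separating \emph{small-scale} contributions, where only the $\ell_{p_0}$-constraint is active, from \emph{large-scale} ones, where only the $\ell_{p_1}$-constraint is active, with a transitional scale determined by the relation between $\tilde\theta$ and $\hat\theta$. Making the estimates uniform across all cases, so that the implied constants depend only on $\mathfrak{Z}_0$, is where most of the technical effort will go.
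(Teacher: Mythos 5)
Your plan is the paper's own strategy: discretize via the $P_{t,m}$ from \cite{vas_inters}, use continuity of the projections $P_E$ to make the block approximants a single finite-rank linear operator, replace the finite-dimensional Kolmogorov estimates by Gluskin's linear-width theorem (Theorem \ref{gl_teor_lin}) together with the Pietsch--Stesin formula, allocate ranks $l_{t,m}$ with $\sum l_{t,m}\lesssim n$ centered at a suitable $(t_1(n),m_1(n))$, and let the strict-minimum hypothesis $\theta_{j_*}<\min_{j\ne j_*}\theta_j$ absorb the geometric tails. Two remarks. First, a cosmetic one: the index set is genuinely two-dimensional, $(t,m)$ (domain piece times dyadic level), and all the critical scales $t_*(n),t_{**}(n),t_{***}(n),t(n)$ and corner points at which the sums are evaluated live on that lattice; your single parameter $k$ understates the bookkeeping but does not change the method.

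Second, and this is the one substantive gap: you propose to handle the intersection by using \emph{only} the two single-ball inclusions $W_{t,m}\subset c\,B_{p_1}^{\nu_{t,m}}$ (large scales) and $W_{t,m}\subset c\,B_{p_0}^{\nu_{t,m}}$ (small scales) with one transitional scale. That suffices for cases 6--8, 11, 12 of Definition \ref{theta_j}, but not for the mixed cases 9 and 10, where $\frac{1}{p_0}+\frac1q$ and $\frac{1}{p_1}+\frac1q$ lie on opposite sides of $1$. There the paper inserts a third inclusion, (\ref{wkm_lq}): by Galeev's theorem \cite{galeev1} (or H\"older), $W_{t,m}$ sits inside a dilated $B_{q'}^{\nu_{t,m}}$ with the \emph{interpolated} radius $2^{k_*t((1-\lambda)\mu_*-\lambda\alpha_*)}2^{-m((1-\lambda)(s_*+1/q-1/p_1)+\lambda(1/q-1/p_0))}$, and this is what is used in the transitional region II. The point is that in that region the product (radius)$\times\lambda_{l}(B_p^N,l_q^N)$ computed from either pure ball is not monotone in $m$ in the right direction (the Gluskin exponent $\max\{1/q,1/p'\}$ switches between $1/q$ and $1/p'$ across the two constraints), so the sum is not dominated by the corner values $S_1,\dots,S_6$ that produce the exponents $\theta_1,\dots,\theta_6$ listed in cases 9--10. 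Without the interpolated inclusion your two-ball dichotomy would close only a subset of the twelve cases, so you should add this ingredient explicitly.
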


\begin{proof}
In \cite[p. 8]{vas_inters} the partition $\{\Omega_t\}_{t\ge t_0}$
of $\Omega$, the operators $P_{t,m}:Y_q(\Omega)\rightarrow
Y_q(\Omega)$ and the numbers $\nu_{t,m}$, $\nu'_{t,m}\in \Z_+$
were defined. Here we notice their following properties:
\begin{align}
\label{supp} (P_{t,m}f)|_{\Omega\backslash \Omega_t}=0, \quad f\in
Y_q(\Omega), \quad t\ge t_0,
\end{align}
\begin{align}
\label{rk_ptm} {\rm rk}\, P_{t,m}\le \nu'_{t,m}
\underset{\mathfrak{Z}_0}{\lesssim} 2^{\gamma _*k_*t}\cdot 2^m,
\quad {\rm rk}\, (P_{t,m+1}-P_{t,m})\le \nu_{t,m}
\underset{\mathfrak{Z}_0}{\lesssim} 2^{\gamma _*k_*t}\cdot 2^m.
\end{align}

Taking into account that the operators $P_{t,m}$ are linear and
continuous (since the projections $P_E$ are continuous), and
repeating the arguments from \cite[pp. 14--16,
19--21]{vas_inters}, we obtain:
\begin{enumerate}
\item if $p_0\ge q$, $p_1\ge q$, the linear widths are estimated
as the Kolmogorov widths; this implies the upper estimate in case
1 of Definition \ref{theta_j};

\item if $p_1<q<p_0$ and, in addition, $q\le 2$ or $p_1\ge 2$, the linear widths are estimated
as the Kolmogorov widths for $p_1<q<p_0$, $q\le 2$; this implies
the upper estimate in case 2 of Definition \ref{theta_j};

\item if $p_0\le q$, $p_1\le q$ and, in addition, $q\le 2$ or $\min\{p_0, \, p_1\}\ge
2$, the linear widths are estimated as the Kolmogorov widths for
$p_0\le q\le 2$, $p_1\le q\le 2$; this implies the upper estimate
in case 3 of Definition \ref{theta_j};

\item if $p_0<q<p_1$ and, in addition, $q\le 2$ or $p_0\ge 2$, the linear widths are estimated as the Kolmogorov
widths for $p_0<q<p_1$, $q\le 2$; this implies the upper estimate
in case 4 of Definition \ref{theta_j}.
\end{enumerate}

Hence, it remains to consider the case $q>2$, $\min\{p_1, \,
p_0\}<2$.

Let us formulate the corollary from E.D. Gluskin's theorem about
the linear widths of the finite-dimensional balls
\cite{bib_gluskin}.
\begin{trma}
\label{gl_teor_lin} {\rm \cite{bib_gluskin}.} If $1\le p<2<q\le
\infty$, $(p, \, q)\ne (1, \, \infty)$, $n\le N/2$, then
\begin{align}
\label{glusk_ln} \lambda_n(B_p^N, \, l_q^N) \underset{p,q}{\asymp}
\min \{1, \, n^{-1/2}N^{\max\{1/q, 1/p'\}}\}.
\end{align}
If $1\le p\le q\le 2$ or $2\le p\le q \le\infty$, $n\le N/2$, then
\begin{align}
\label{glusk_ln1} \lambda_n(B_p^N, \, l_q^N)
\underset{p,q}{\asymp} 1.
\end{align}
The upper estimates also hold for $N/2< n\le N$.
\end{trma}

For $1\le q\le p\le \infty$ the following equation holds
\cite{pietsch1}, \cite{stesin}:
\begin{align}
\label{pietsch_stesin} \lambda_n(B_p^N, \, l_q^N) = (N-n)^{\frac
1q-\frac 1p}.
\end{align}

Let $W_{t,m}$ be the set of sequences $(c_j)_{j=1} ^{\nu_{t,m}}$
such that
\begin{align}
\label{w_tm} \begin{array}{c} \left(\sum \limits
_{j=1}^{\nu_{t,m}}|c_j|^{p_1} \right)^{1/p_1} \le
2^{\mu_*k_*t}\cdot 2^{-m(s_*+1/q-1/p_1)}, \\ \left(\sum \limits
_{j=1}^{\nu_{t,m}}|c_j|^{p_0} \right)^{1/p_0} \le
2^{-\alpha_*k_*t}\cdot 2^{m(1/p_0-1/q)}.\end{array}
\end{align}
The following assertion can be proved similarly to Proposition 2
from \cite{vas_inters}.

\begin{Sta}
Let $l\in \Z_+$. Then
\begin{align}
\label{dl} \lambda_l((P_{t,m+1}-P_{t,m})(BX_{p_1}(\Omega)\cap
BX_{p_0}(\Omega)), \, Y_q(\Omega))
\underset{\mathfrak{Z}_0}{\lesssim} \lambda_l(W_{t,m}, \,
l_q^{\nu_{t,m}}),
\end{align}
\begin{align}
\label{dl1} \lambda_l(P_{t,m}(BX_{p_1}(\Omega)\cap
BX_{p_0}(\Omega)), \, Y_q(\Omega))
\underset{\mathfrak{Z}_0}{\lesssim} 2^{-\alpha_*k_*t}\cdot
2^{-m(1/q-1/p_0)}\lambda_l(B_{p_0}^{\nu'_{t,m}}, \,
l_q^{\nu'_{t,m}}).
\end{align}
\end{Sta}

We also need the following assertion: if $\frac{1}{q'} =
\frac{1-\lambda}{p_1} + \frac{\lambda}{p_0}$, $\lambda \in (0, \,
1)$, then
\begin{align}
\label{wkm_lq} W_{k,m} \subset
2^{k_*t\left((1-\lambda)\mu_*-\lambda \alpha_*\right)} \cdot
2^{-m((1-\lambda)(s_*+1/q-1/p_1)+\lambda(1/q-1/p_0))}B_{q'}^{\nu_{t,m}}.
\end{align}
It is a particular case of Galeev's result \cite[Theorem
2]{galeev1} or the corollary of H\"{o}lder's inequality.

We define the numbers $\hat m_t$, $\tilde m_t$, $m_t\in \R$ by the
equations
\begin{align}
\label{hat_mt} 2^{\gamma_*k_*t}\cdot 2^{\hat m_t}=n,
\end{align}
\begin{align}
\label{til_mt_t} 2^{\tilde m_t s_*} = 2^{(\mu_* + \alpha_*
+\gamma_*/p_0 -\gamma_*/p_1)k_*t},
\end{align}
\begin{align}
\label{mt_t} 2^{m_t(s_*+1/p_0-1/p_1)} = 2^{(\mu_*+\alpha_*) k_*t}.
\end{align}
Further, we define the numbers $m_t^{(q)}$, $m_t^{(p_0')}$,
$m_t^{(p_1')}$ by the equations
\begin{align}
\label{line_mt} 2^{\gamma_*k_*t}\cdot 2^{m_t^{(q)}}=n^{q/2}, \quad
2^{\gamma_*k_*t}\cdot 2^{m_t^{(p_0')}}=n^{p_0'/2}, \quad
2^{\gamma_*k_*t}\cdot 2^{m_t^{(p_1')}}=n^{p_1'/2},
\end{align}
and set $\overline{m}_t=\max \{m_t^{(q)}, \, m_t^{(p_0')}, \,
m_t^{(p_1')}\}$.

The numbers $t_{***}(n)$, $\varepsilon>0$, $t_1(n)\in [0, \,
t_{***}(n)]$ and $m_1(n)$ will be chosen later in dependence on
$\mathfrak{Z}_0$. Here $2^{t_{***}(n)}$ will be a positive power
of $n$, depending on $\mathfrak{Z}_0$, $2^{m_1(n)}\in [1, \,
n^{\max\{q, \, p_0', \, p_1'\}/2}]$.

We set
$$
2^{m_t^*} = \max \{2^{\hat m_t}\cdot 2^{-\varepsilon |t-t_1(n)|},
\, 1\},
$$
$$
l_{t,m} = \left\{ \begin{array}{l} \lceil n\cdot
2^{-\varepsilon(|m-m_1(n)|+|t-t_1(n)|)}\rceil, \quad 0\le t\le
t_{***}(n), \; m_t^*\le m\le \overline{m}_t, \\ 0, \quad
\text{otherwise}.
\end{array}\right.
$$
For a sufficiently small $\varepsilon>0$ we have
\begin{align}
\label{sum_ltm} \sum \limits _{t\ge 0, \, m\ge m_t^*}
l_{t,m}\underset{\varepsilon, \, \mathfrak{Z}_0}{\lesssim} n.
\end{align}

We set $\tilde \Omega_t =\cup_{l\ge t} \Omega_l$. From
(\ref{supp}) it follows that for $f\in Y_q(\Omega)$
\begin{align}
\label{f_razl} f = \sum \limits _{t_0\le t\le [t_{***}(n)]}
P_{t,[m_t^*]}f + \sum \limits _{t_0\le t\le [t_{***}(n)]} \sum
\limits _{m\ge [m_t^*]} (P_{t,m+1}f - P_{t,m}f) + f\cdot \chi
_{\tilde \Omega _{[t_{***}(n)]+1}}.
\end{align}

By (\ref{rk_ptm}), (\ref{dl}), (\ref{dl1}), (\ref{hat_mt}),
(\ref{sum_ltm}), (\ref{f_razl}), in order to proof Theorem
\ref{trm} it suffices to estimate from above
\begin{align}
\label{sum} \begin{array}{c} S:=\sum \limits _{t_0\le t\le
t_{***}(n)} \sum \limits _{m\ge [m_t^*]}
\lambda_{l_{t,m}}(W_{t,m}, \, l_q^{\nu_{t,m}}) +\\+ \sum \limits
_{t\le t_{***}(n):\; m_t^*=0} 2^{-\alpha_*k_*t}
\lambda_{l_{t,0}}(B_{p_0}^{\nu'_{t,0}}, \, l_q^{\nu'_{t,0}})+
\|f\|_{Y_q(\tilde \Omega _{[t_{***}(n)]+1})}.
\end{array}
\end{align}

In \cite[Proposition 4]{vas_inters} it was shown that, for $p_1\le
q$, $p_0\le q$ and $f\in X_{p_0}(\Omega)\cap X_{p_1}(\Omega)$,
\begin{align}
\label{p0lq_emb} \|f\|_{Y_q(\tilde\Omega_t)}
\underset{\mathfrak{Z}_0}{\lesssim}
2^{-\frac{\alpha_*(s_*+1/q-1/p_1)-\mu_*(1/p_0-1/q)}
{s_*+1/p_0-1/p_1}k_*t}.
\end{align}

{\bf The case $q>2$, $\max\{p_0, \, p_1\}<2$,
$\frac{1}{p_0}+\frac{1}{q}\ge 1$, $\frac{1}{p_1} +\frac{1}{q}\ge
1$.} It is well-known that $\lambda_n(B_p^N, \, l_q^N)
\underset{p,q}{\asymp} d_n(B_p^N, \, l_q^N)$ for $\frac 1p+\frac
1q \ge 1$ (see \cite{bib_gluskin}). Hence, the estimates for the
linear widths of $\lambda_n(BX_{p_0}(\Omega) \cap
BX_{p_1}(\Omega), \, Y_q(\Omega))$ can be obtained as for the
Kolmogorov widths in \cite[p. 21]{vas_inters}. This implies the
upper estimate for case 5 of Definition \ref{theta_j}.

{\bf The case $q>2$, $\max\{p_0, \, p_1\}<2$,
$\frac{1}{p_0}+\frac{1}{q}\le 1$, $\frac{1}{p_1} +\frac{1}{q}\le
1$, $p_1> p_0$.} We define the numbers $m_t'$ by the equation
\begin{align}
\label{mtpr} 2^{\mu_*k_*t}\cdot 2^{-m'_t(s_*+1/q-1/p_1)} =
2^{-\alpha_* k_*t}\cdot 2^{-m_t'(1/q-1/p_0)}\cdot
n^{-1/2}\cdot2^{\gamma_*k_*t/p_0'}\cdot 2^{m_t'/p_0'};
\end{align}
the numbers $t_*(n)$, $t_{**}(n)$, $t_{***}(n)$ are defined by the
equations
\begin{align}
\label{t_n_st} \tilde m_{t_*(n)} = \hat m_{t_*(n)}, \quad \tilde
m_{t_{**}(n)} = m_{t_{**}(n)}^{(p_1')}, \quad m_{t_{***}(n)} =
m_{t_{***}(n)}^{(p_0')}.
\end{align}
Then
\begin{align}
\label{m_t_pr} m'_{t_{**}(n)} \stackrel{(\ref{til_mt_t}),
(\ref{line_mt})}{=} \tilde m_{t_{**}(n)}, \quad m'_{t_{***}(n)}
\stackrel{(\ref{mt_t}), (\ref{line_mt})}{=} m_{t_{***}(n)},
\end{align}
\begin{align}
\label{tst}
2^{(\mu_*+\alpha_*+\gamma_*(s_*+1/p_0-1/p_1))k_*t_*(n)}
\stackrel{(\ref{hat_mt}),(\ref{til_mt_t})}{=} n^{s_*},
\end{align}
\begin{align}
\label{tsst}
2^{(\mu_*+\alpha_*+\gamma_*(s_*+1/p_0-1/p_1))k_*t_{**}(n)}
\stackrel{(\ref{til_mt_t}), (\ref{line_mt})}{=} n^{s_*p_1'/2},
\end{align}
\begin{align}
\label{tssst}
2^{(\mu_*+\alpha_*+\gamma_*(s_*+1/p_0-1/p_1))k_*t_{***}(n)}
\stackrel{(\ref{mt_t}), (\ref{line_mt})}{=}
n^{(s_*+1/p_0-1/p_1)p_0'/2}.
\end{align}

Now we estimate the sum (\ref{sum}).

Let
$$
{\rm I} = \{(t, \, m):\;t\ge 0, \; m_t^*\le m\le m_t^{(p_1')}, \;
m\ge \tilde m_t\},
$$
$$
{\rm II} = \{(t, \, m):\; 0\le t\le t_{***}(n), \; m\ge
m_t^{(p_1')}, \; m\ge m_t'\},
$$
$$
{\rm III} = \{(t, \, m):\; 0\le t\le t_{***}(n), \; m_t^*\le m\le
\tilde m_t, \; m\le m_t'\}.
$$
In ${\rm I}$ and ${\rm II}$ we use the inclusion $W_{t,m}\subset
2^{\mu_*k_*t}\cdot 2^{-m(s_*+1/q-1/p_1)}B^{\nu_{t,m}}_{p_1}$, in
${\rm III}$ we apply the inclusion $W_{t,m} \subset
2^{-\alpha_*k_*t}\cdot 2^{-m(1/q-1/p_0)}B^{\nu_{t,m}}_{p_0}$.

From Theorem \ref{gl_teor_lin}, (\ref{til_theta}),
(\ref{p0lq_emb}) and (\ref{tssst}), we get
$$
S \underset{\mathfrak{Z}_1}{\lesssim} n^{-\hat \theta p_0'/2}+\sum
\limits _{(t,m)\in {\rm I}} 2^{\mu_*k_*t}\cdot
2^{-m(s_*+1/q-1/p_1)} l_{t,m}^{-1/2} \cdot
2^{\gamma_*k_*t/p_1'}\cdot 2^{m/p_1'}+
$$
$$
+\sum \limits _{(t, \, m)\in {\rm II}} 2^{\mu_*k_*t}\cdot
2^{-m(s_*+1/q-1/p_1)} + \sum \limits _{(t, \, m)\in {\rm III}}
2^{-\alpha_*k_*t}\cdot 2^{-m(1/q-1/p_0)} l_{t,m}^{-1/2} \cdot
2^{\gamma_*k_*t/p_0'}\cdot 2^{m/p_0'} =:S'.
$$
In the second sum, there is a decreasing geometric progression in
$m$, and in the third sum, there is an increasing geometric
progression in $m$. Also notice that
\begin{align}
\label{eq} \begin{array}{c} s_*\frac{\mu_*+\gamma_*\left(s_*+\frac
1q -\frac{1}{p_1}\right)} {\mu_*+\alpha_*+\gamma_*\left(s_*+ \frac
{1}{p_0} -\frac{1}{p_1}\right)} -s_* - \frac 1q +\frac{1}{p_1} =
\\ = -\hat \theta -\left(\frac{1}{p_0}-\frac{1}{p_1}\right)
\left(1-\frac{\tilde \theta}{s_*}\right).
\end{array}
\end{align}

For $\varepsilon=0$ we calculate the values of summands at the
points $(t, \, m)=(0, \, \hat m_0)$, $(0, \, m_0^{(p_1')})$,
$(t_*(n), \, \hat m_{t_*(n)})$, $(t_{**}(n), \,
m^{(p_1')}_{t_{**}(n)})$, $(t_{***}(n), \,
m^{(p_0')}_{t_{***}(n)})$ (here we take into account
(\ref{til_mt_t}), (\ref{mtpr}), (\ref{t_n_st}) and
(\ref{m_t_pr})):
$$
S_1(n) =2^{-\hat m_0(s_*+1/q-1/p_1)} n^{-1/2} \cdot 2^{\hat
m_0/p_1'} \stackrel{(\ref{hat_mt})}{=} n^{-s_*-\frac 1q +\frac
12},
$$
$$
S_2(n) =2^{-m_0^{(p_1')}(s_*+1/q-1/p_1)} n^{-1/2} \cdot
2^{m_0^{(p_1')}/p_1'} \stackrel{(\ref{line_mt})}{=}
n^{-(s_*+1/q-1/p_1)p_1'/2},
$$
$$
S_3(n) = 2^{\mu_*k_*t_*(n)}\cdot 2^{-\hat
m_{t_*(n)}(s_*+1/q-1/p_1)} n^{-\frac 12}\cdot
2^{\gamma_*k_*t_*(n)/p'_1}\cdot 2^{\hat
m_{t_*(n)}/p'_1}\stackrel{(\ref{hat_mt})}{\underset{\mathfrak{Z}_0}{\asymp}}
$$$$\asymp 2^{\mu_*k_*t_*(n)}\cdot 2^{-\hat m_{t_*(n)}(s_*+1/q-1/p_1)}
n^{-\frac 12 +\frac{1}{p_1'}} \stackrel{(\ref{hat_mt}),
(\ref{tst}), (\ref{eq})}{\underset{\mathfrak{Z}_0}{\asymp}}
n^{-\hat \theta -(1/p_0-1/p_1)(1-\tilde \theta/s_*)+1/2-1/p_1},
$$
$$
S_4(n) = 2^{\mu_*k_*t_{**}(n)}\cdot 2^{-
m^{(p_1')}_{t_{**}(n)}(s_*+1/q-1/p_1)} n^{-1/2} \cdot
2^{\gamma_*k_*t_{**}(n)/p_1'}\cdot 2^{m^{(p_1')}_{t_{**}(n)}/p_1'}
\stackrel{(\ref{line_mt}),(\ref{tsst}),
(\ref{eq})}{\underset{\mathfrak{Z}_0}{\asymp}}
$$
$$
\asymp n^{-\left(\hat \theta +(1/p_0-1/p_1)(1-\tilde
\theta/s_*)\right)p_1'/2},
$$
$$
S_5(n) = 2^{\mu_*k_*t_{***}(n)}\cdot
2^{-m^{(p_0')}_{t_{***}(n)}(s_*+1/q-1/p_1)}
\stackrel{(\ref{line_mt}),
(\ref{tssst})}{\underset{\mathfrak{Z}_0}{\asymp}} n^{-p_0'\hat
\theta/2}.
$$

By the hypotheses of the theorem, there is $j_*\in \{1, \, \dots,
\, j_0\}$ such that $\theta _{j_*}<\min _{j\ne j_*} \theta_j$. We
again take into account (\ref{til_mt_t}), (\ref{mtpr}),
(\ref{t_n_st}) and (\ref{m_t_pr}), appropriately choose $t_1(n)$,
$m_1(n)$ and $\varepsilon>0$, and obtain
$S'\underset{\mathfrak{Z}_0} {\lesssim} S_1(n) + S_2(n) + S_3(n) +
S_4(n) + S_5(n)$; this gives the desired upper estimate.

{\bf The case $q>2$, $\max\{p_0, \, p_1\}=2$,
$\frac{1}{p_0}+\frac{1}{q}\le 1$, $\frac{1}{p_1} +\frac{1}{q}\le
1$, $p_1> p_0$} is regarded as the previous case, taking into
account that $\hat m_t=m^{(p_1')}_t$. Hence, we obtain the
estimate $S\underset{\mathfrak{Z}_0} {\lesssim} S_1(n) + S_2(n) +
S_3(n)$, where
$$
S_1(n)\underset{\mathfrak{Z}_0}{\asymp} n^{-s_*-\frac 1q +\frac
12}, \quad S_2(n)\underset{\mathfrak{Z}_0}{\asymp} n^{-\hat \theta
-(1/p_0-1/2)(1-\tilde \theta/s_*)}, \quad S_3(n)
\underset{\mathfrak{Z}_0}{\asymp}n^{-p_0'\hat \theta/2}.
$$

{\bf The case $q>2$, $\max\{p_0, \, p_1\}<2$,
$\frac{1}{p_0}+\frac{1}{q}\le 1$, $\frac{1}{p_1} +\frac{1}{q}\le
1$, $p_1<p_0$.} We define the numbers $m_t'$ by the equation
\begin{align}
\label{mtpr1} 2^{\mu_*k_*t}\cdot 2^{-m_t'(s_*+1/q-1/p_1)}\cdot
n^{-1/2}\cdot 2^{\gamma_* k_*t/p_1'}\cdot 2^{m_t'/p_1'} =
2^{-\alpha_*k_*t}\cdot 2^{-m_t'(1/q-1/p_0)};
\end{align}
the numbers $t_*(n)$, $t_{**}(n)$, $t_{***}(n)$ are defined by the
equations
\begin{align}
\label{t_n_st1} \tilde m_{t_*(n)} = \hat m_{t_*(n)}, \quad \tilde
m_{t_{**}(n)} = m_{t_{**}(n)}^{(p_0')}, \quad m_{t_{***}(n)} =
m_{t_{***}(n)}^{(p_1')}.
\end{align}
Then
\begin{align}
\label{m_t_pr1} m'_{t_{**}(n)} \stackrel{(\ref{til_mt_t}),
(\ref{line_mt})}{=} \tilde m_{t_{**}(n)}, \quad m'_{t_{***}(n)}
\stackrel{(\ref{mt_t}), (\ref{line_mt})}{=} m_{t_{***}(n)},
\end{align}
\begin{align}
\label{tst1}
2^{(\mu_*+\alpha_*+\gamma_*(s_*+1/p_0-1/p_1))k_*t_*(n)}
\stackrel{(\ref{hat_mt}),(\ref{til_mt_t})}{=} n^{s_*},
\end{align}
\begin{align}
\label{tsst1}
2^{(\mu_*+\alpha_*+\gamma_*(s_*+1/p_0-1/p_1))k_*t_{**}(n)}
\stackrel{(\ref{til_mt_t}),(\ref{line_mt})}{=} n^{s_*p_0'/2},
\end{align}
\begin{align}
\label{tssst1}
2^{(\mu_*+\alpha_*+\gamma_*(s_*+1/p_0-1/p_1))k_*t_{***}(n)}
\stackrel{(\ref{mt_t}),(\ref{line_mt})}{=}
n^{(s_*+1/p_0-1/p_1)p_1'/2}.
\end{align}

Now we estimate the sum (\ref{sum}).

Let
$$
{\rm I} = \{(t, \, m):\; 0\le t\le t_{***}(n), \; m_t^*\le m\le
m_t^{(p_1')}, \; m\ge \tilde m_t, \; m\ge m_t'\},
$$
$$
{\rm II} = \{(t, \, m):\; t_*(n)\le t_{***}(n), \; m_t^*\le m\le
m_t^{(p_0')}, \; m\le \tilde m_t\},
$$
$$
{\rm III} =\{(t, \, m):\; t_{**}(n)<t\le t_{***}(n), \;
m_t^{(p_0')}\le m\le m_t'\},
$$
$$
{\rm IV} = \{(t, \, m):\; 0\le t\le t_{***}(n), \; m_t^{(p_1')}\le
m<\infty\}.
$$
In ${\rm I}$ and ${\rm IV}$ we apply the inclusion $W_{t,m}\subset
2^{\mu_*k_*t}\cdot 2^{-m(s_*+1/q-1/p_1)}B^{\nu_{t,m}}_{p_1}$, in
${\rm II}$ and ${\rm III}$ we use the inclusion $W_{t,m} \subset
2^{-\alpha_*k_*t}\cdot 2^{-m(1/q-1/p_0)}B^{\nu_{t,m}}_{p_0}$.

Applying Theorem \ref{gl_teor_lin}, (\ref{p0lq_emb}) and
(\ref{tssst1}), we get
$$
S \underset{\mathfrak{Z}_0}{\lesssim} n^{-\hat \theta p_1'/2}+\sum
\limits _{(t, \, m)\in {\rm I}} 2^{\mu_*k_*t}\cdot
2^{-m(s_*+1/q-1/p_1)} l_{t,m}^{-1/2} \cdot
2^{\gamma_*k_*t/p_1'}\cdot 2^{m/p_1'}+
$$
$$
+ \sum \limits _{(t, \, m)\in {\rm II}} 2^{-\alpha_*k_*t}\cdot
2^{-m(1/q-1/p_0)}l_{t,m}^{-1/2} \cdot 2^{\gamma_*k_*t/p_0'}\cdot
2^{m/p_0'}+ \sum \limits _{(t, \, m)\in {\rm III}}
2^{-\alpha_*k_*t}\cdot 2^{-m(1/q-1/p_0)}+$$
$$
+\sum \limits _{(t, \, m)\in {\rm IV}} 2^{\mu_*k_*t}\cdot
2^{-m(s_*+1/q-1/p_1)}=:S'.
$$
In the second and the third sums, there is an increasing geometric
progression in $m$, in the last sum, there is a decreasing
geometric progression in $m$.

We also notice that
\begin{align}
\label{eq1} \begin{array}{c}
-s_*\frac{\alpha_*+\gamma_*/p_0-\gamma_*/q}{\mu_* +\alpha_*
+\gamma_*(s_*+1/p_0-1/p_1)} -\frac 1q +\frac{1}{p_0}= \\
= -\hat
\theta-\left(\frac{1}{p_1}-\frac{1}{p_0}\right)\frac{\tilde
\theta}{s_*}.
\end{array}
\end{align}

By the hypotheses of the theorem, there is $j_*\in \{1, \, \dots,
\, j_0\}$ such that $\theta _{j_*}<\min _{j\ne j_*}\theta_j$.
Taking into account (\ref{til_mt_t}) and (\ref{mtpr1}) together
with (\ref{t_n_st1}) and (\ref{m_t_pr1}), for appropriately chosen
$t_1(n)$, $m_1(n)$, we obtain $S'\underset{\mathfrak{Z}_0}
{\lesssim} S_1(n) + S_2(n) + S_3(n) + S_4(n) + S_5(n)$, where
$$
S_1(n) =2^{-\hat m_0(s_*+1/q-1/p_1)} n^{-1/2} \cdot 2^{\hat
m_0/p_1'} \stackrel{(\ref{hat_mt})}{=} n^{-s_*-\frac 1q +\frac
12},
$$
$$
S_2(n) =2^{-m_0^{(p_1')}(s_*+1/q-1/p_1)} n^{-1/2} \cdot
2^{m_0^{(p_1')}/p_1'} \stackrel{(\ref{line_mt})}{=}
n^{-(s_*+1/q-1/p_1)p_1'/2},
$$
$$
S_3(n) = 2^{-\alpha_*k_*t_*(n)}\cdot 2^{-\hat
m_{t_*(n)}(1/q-1/p_0)}n^{-1/2} \cdot
2^{\gamma_*k_*t_*(n)/p_0'}\cdot 2^{\hat m_{t_*(n)}/p_0'}
\stackrel{(\ref{hat_mt})}{\underset{\mathfrak{Z}_0}{\asymp}}
$$
$$
\asymp 2^{-(\alpha_*+\gamma_*/p_0-\gamma_*/q)k_*t_*(n)} \cdot
n^{-1/q+1/p_0}\cdot n^{-1/2+1/p_0'}
\stackrel{(\ref{tst1}),(\ref{eq1})}{\underset{\mathfrak{Z}_0}{\asymp}}
n^{-\hat \theta -(1/p_1-1/p_0)\tilde \theta/s_* +1/2-1/p_0},
$$
$$
S_4(n) =2^{-\alpha_*k_*t_{**}(n)}\cdot
2^{-m^{(p_0')}_{t_{**}(n)}(1/q-1/p_0)}n^{-1/2} \cdot
2^{\gamma_*k_*t_{**}(n)/p_0'}\cdot 2^{m^{(p_0')}_{t_{**}(n)}/p_0'}
\stackrel{(\ref{line_mt})}{\underset{\mathfrak{Z}_0}{\asymp}}
$$
$$
\asymp 2^{-(\alpha_*+\gamma_*/p_0-\gamma_*/q)k_*t_{**}(n)} \cdot
n^{-(1/q-1/p_0)p_0'/2} \stackrel{(\ref{tsst1}),
(\ref{eq1})}{\underset{\mathfrak{Z}_0}{\asymp}} n^{-(\hat \theta
+(1/p_1-1/p_0)\tilde \theta/s_*)p_0'/2},
$$
$$
S_5(n) = 2^{\mu_*k_*t_{***}(n)}\cdot
2^{-m^{(p_1')}_{t_{***}(n)}(s_*+1/q-1/p_1)} n^{-1/2} \cdot
2^{\gamma_*k_*t_{***}(n)/p_1'}\cdot
2^{m_{t_{***}(n)}^{(p_1')}/p_1'}
\stackrel{(\ref{line_mt})}{\underset{\mathfrak{Z}_0}{\asymp}}
$$
$$
\asymp 2^{(\mu_*+\gamma_*(s_*+1/q-1/p_1))t_{***}(n)}\cdot
n^{-(s_*+1/q-1/p_1)p_1'/2}\stackrel{(\ref{tssst1})}{\underset{\mathfrak{Z}_0}{\asymp}}
n^{-\hat \theta p_1'/2}.
$$

{\bf The case $q>2$, $\max\{p_0, \, p_1\}=2$,
$\frac{1}{p_0}+\frac{1}{q}\le 1$, $\frac{1}{p_1} +\frac{1}{q}\le
1$, $p_1<p_0$} is regarded as the previous case; here we take into
account that $\hat m_t=m_t^{(p_0')}$. Hence, we obtain the
estimate $S \underset{\mathfrak{Z}_0}{\lesssim} S_1(n)+ S_2(n) +
S_3(n)+ S_4(n)$, where
$$
S_1(n)\underset{\mathfrak{Z}_0}{\asymp} n^{-s_*-\frac 1q +\frac
12}, \quad S_2(n)\underset{\mathfrak{Z}_0}{\asymp}
n^{-(s_*+1/q-1/p_1)p_1'/2},
$$
$$
S_3(n)\underset{\mathfrak{Z}_0}{\asymp} n^{-\hat \theta
-(1/p_1-1/2)\tilde \theta/s_*}, \quad
S_4(n)\underset{\mathfrak{Z}_0}{\asymp} n^{-\hat \theta p_1'/2}.
$$

{\bf The case $q>2$, $\max\{p_0, \, p_1\}<2$,
$\frac{1}{p_0}+\frac{1}{q}\le 1$, $\frac{1}{p_1} +\frac{1}{q}\le
1$, $p_1=p_0$.} We denote $p=p_0=p_1$. Arguing as in the previous
cases and taking into account that $m_t^{(p_0')}=m_t^{(p_1')}$, we
obtain the estimate $S \underset{\mathfrak{Z}_0}{\lesssim} S_1(n)+
S_2(n) + S_3(n)+ S_4(n)$, where
$$
S_1(n)\underset{\mathfrak{Z}_0}{\asymp}n^{-s_*-\frac 1q +\frac
12}, \quad S_2(n)\underset{\mathfrak{Z}_0}{\asymp}
n^{-(s_*+1/q-1/p)p'/2},
$$
$$
S_3(n)\underset{\mathfrak{Z}_0}{\asymp} n^{-\hat \theta +
1/2-1/p}, \quad S_4(n) \underset{\mathfrak{Z}_0}{\asymp} n^{-\hat
\theta p'/2}.
$$

{\bf The case $q>2$, $\max\{p_0, \, p_1\}<2$,
$\frac{1}{p_0}+\frac{1}{q}> 1$, $\frac{1}{p_1} +\frac{1}{q}< 1$.}
Then $\frac{1}{p_1}< \frac{1}{q'} < \frac{1}{p_0}$. Let $\lambda
\in (0, \, 1)$ be such that
\begin{align}
\label{lam} \frac{1}{q'} =\frac{1-\lambda}{p_1}
+\frac{\lambda}{p_0}.
\end{align}
Then
\begin{align}
\label{nepr1} \begin{array}{c} 2^{\mu_*k_*t}\cdot 2^{-\tilde
m_t(s_*+1/q-1/p_1)} n^{-1/2} \cdot 2^{\gamma_*k_*t/p_1'}\cdot
2^{\tilde m_t/p_1'} \stackrel{(\ref{til_mt_t})}{=} \\
= 2^{((1-\lambda)\mu_*-\lambda \alpha_*)k_*t}\cdot 2^{-\tilde
m_t((1-\lambda)(s_*+1/q-1/p_1)+\lambda(1/q-1/p_0))} n^{-1/2}\cdot
2^{\gamma_*k_*t/q}\cdot 2^{\tilde m_t/q},
\end{array}
\end{align}
\begin{align}
\label{nepr2} \begin{array}{c} 2^{-\alpha_*k_*t}\cdot 2^{-
m_t(1/q-1/p_0)} \stackrel{(\ref{mt_t})}{=} \\
= 2^{((1-\lambda)\mu_*-\lambda \alpha_*)k_*t}\cdot
2^{-m_t((1-\lambda)(s_*+1/q-1/p_1)+\lambda(1/q-1/p_0))}.
\end{array}
\end{align}

We define the numbers $m_t'$ by the equation
\begin{align}
\label{mtpr2} \begin{array}{c}2^{\mu_*k_*t}\cdot
2^{-m'_t(s_*+1/q-1/p_1)} =\\= 2^{((1-\lambda)\mu_*-\lambda
\alpha_*)k_*t}\cdot
2^{-m'_t((1-\lambda)(s_*+1/q-1/p_1)+\lambda(1/q-1/p_0))}
n^{-1/2}\cdot 2^{\gamma_*k_*t/q}\cdot 2^{m'_t/q}; \end{array}
\end{align}
the numbers $t_*(n)$, $t_{**}(n)$, $t_{***}(n)$, $t(n)$ are
defined by the equations
\begin{align}
\label{t_n_st2} \tilde m_{t_*(n)} = \hat m_{t_*(n)}, \quad \tilde
m_{t_{**}(n)} = m_{t_{**}(n)}^{(p_1')}, \quad m_{t_{***}(n)} =
m_{t_{***}(n)}^{(q)}, \quad m_{t(n)} = \hat m_{t(n)}.
\end{align}
Now using (\ref{til_mt_t}), (\ref{mt_t}), (\ref{line_mt}),
(\ref{lam}), (\ref{mtpr2}), (\ref{t_n_st2}) we get
\begin{align}
\label{m_t_pr2} m'_{t_{**}(n)} = \tilde m_{t_{**}(n)}, \quad
m'_{t_{***}(n)} = m_{t_{***}(n)},
\end{align}
\begin{align}
\label{tst2}
2^{(\mu_*+\alpha_*+\gamma_*(s_*+1/p_0-1/p_1))k_*t_*(n)} = n^{s_*},
\end{align}
\begin{align}
\label{tsst2}
2^{(\mu_*+\alpha_*+\gamma_*(s_*+1/p_0-1/p_1))k_*t_{**}(n)} =
n^{s_*p_1'/2},
\end{align}
\begin{align}
\label{tssst2}
2^{(\mu_*+\alpha_*+\gamma_*(s_*+1/p_0-1/p_1))k_*t_{***}(n)} =
n^{(s_*+1/p_0-1/p_1)q/2},
\end{align}
\begin{align}
\label{tt2} 2^{(\mu_*+\alpha_*+\gamma_*(s_*+1/p_0-1/p_1))k_*t(n)}
= n^{s_*+1/p_0-1/p_1}.
\end{align}

Let us estimate the sum (\ref{sum}).

We set
$$
{\rm I} = \{(t, \, m):\; 0\le t\le t_{***}(n), \; m_t^*\le m\le
m_t^{(p_1')}, \; m\ge \tilde m_t\},
$$
$$
{\rm II} = \{(t, \, m):\; 0\le t\le t_{***}(n), \; m_t^*\le m\le
\tilde m_t, \; m\ge m_t, \; m\le m_t'\},
$$
$$
{\rm III} = \{(t, \, m):\; 0\le t\le t_{***}(n), \; m\ge
m_t^{(p_1')}, \; m\ge m_t'\},
$$
$$
{\rm IV} = \{(t, \, m):\; 0\le t\le t_{***}(n), \; m\ge m_t^*, \;
m\le m_t\}.
$$
In ${\rm I}$ and ${\rm III}$ we use the inclusion $W_{t,m}\subset
2^{\mu_*k_*t}\cdot 2^{-m(s_*+1/q-1/p_1)}B_{p_1}^{\nu_{t,m}}$, in
${\rm II}$ we apply (\ref{wkm_lq}), in ${\rm IV}$ we use the
inclusion $W_{t,m} \subset 2^{-\alpha_*k_*t}\cdot
2^{-m(1/q-1/p_0)} B_{p_0}^{\nu_{t,m}}$.

Applying Theorem \ref{gl_teor_lin} and taking into account
(\ref{p0lq_emb}), (\ref{tssst2}), we get
$$
S \underset{\mathfrak{Z}_0}{\lesssim} n^{-\hat \theta q/2}+\sum
\limits _{(t, \, m)\in {\rm I}} 2^{\mu_*k_*t}\cdot
2^{-m(s_*+1/q-1/p_1)} l_{t,m}^{-1/2} \cdot
2^{\gamma_*k_*t/p_1'}\cdot 2^{m/p_1'}+
$$
$$
+\sum \limits _{(t, \, m)\in {\rm II}} 2^{((1-\lambda)\mu_*
-\lambda\alpha_*)k_*t}\cdot 2^{-m((1-\lambda) (s_*+1/q-1/p_1)
+\lambda(1/q-1/p_0))}l_{t,m}^{-1/2} \cdot 2^{\gamma_*k_*t/q}\cdot
2^{m/q}+
$$
$$
+\sum \limits _{(t, \, m)\in {\rm III}} 2^{\mu_*k_*t}\cdot
2^{-m(s_*+1/q-1/p_1)} + \sum \limits _{(t, \, m)\in {\rm IV}}
2^{-\alpha_*k_*t}\cdot 2^{-m(1/q-1/p_0)} l_{t,m}^{-1/2} \cdot
2^{\gamma_*k_*t/q}\cdot 2^{m/q}=:S'.
$$

By the hypotheses of the theorem, $\theta_{j_*}<\min _{j\ne
j_*}\theta_j$. In addition, (\ref{nepr1}), (\ref{nepr2}),
(\ref{mtpr2}), (\ref{t_n_st2}), (\ref{m_t_pr2}) hold. Hence, for
appropriate $t_1(n)$, $m_1(n)$ the estimate
$S'\underset{\mathfrak{Z}_0} {\lesssim} S_1(n) + S_2(n) + S_3(n) +
S_4(n) + S_5(n)+S_6(n)$ holds with
$$
S_1(n) =2^{-\hat m_0(s_*+1/q-1/p_1)} n^{-1/2} \cdot 2^{\hat
m_0/p_1'} \stackrel{(\ref{hat_mt})}{=} n^{-s_*-\frac 1q +\frac
12},
$$
$$
S_2(n) =2^{-m_0^{(p_1')}(s_*+1/q-1/p_1)} n^{-1/2} \cdot
2^{m_0^{(p_1')}/p_1'} \stackrel{(\ref{line_mt})}{=}
n^{-(s_*+1/q-1/p_1)p_1'/2},
$$
$$
S_3(n) = 2^{\mu_*k_*t_*(n)}\cdot 2^{-\hat
m_{t_*(n)}(s_*+1/q-1/p_1)} n^{-\frac 12}\cdot
2^{\gamma_*k_*t_*(n)/p_1'}\cdot 2^{\hat m_{t_*(n)}/p_1'}
\stackrel{(\ref{hat_mt}), (\ref{eq}),
(\ref{tst2})}{\underset{\mathfrak{Z}_0}{\asymp}}$$$$\asymp
n^{-\hat \theta -(1/p_0-1/p_1)(1-\tilde \theta/s_*)+1/2-1/p_1},
$$
$$
S_4(n) = 2^{\mu_*k_*t_{**}(n)}\cdot
2^{-m^{(p_1')}_{t_{**}(n)}(s_*+1/q-1/p_1)} n^{-1/2} \cdot
2^{\gamma_*k_*t_{**}(n)/p_1'}\cdot 2^{m^{(p_1')}_{t_{**}(n)}/p_1'}
\stackrel{(\ref{line_mt}), (\ref{eq}),
(\ref{tsst2})}{\underset{\mathfrak{Z}_0}{\asymp}}
$$
$$
\asymp n^{-\left(\hat \theta +(1/p_0-1/p_1)(1-\tilde
\theta/s_*)\right)p_1'/2},
$$
$$
S_5(n)= 2^{-\alpha_*k_*t(n)}\cdot 2^{-\hat m_{t(n)}(1/q-1/p_0)}
n^{-1/2} \cdot 2^{\gamma_*k_*t(n)/q}\cdot 2^{\hat m_{t(n)}/q}
\stackrel{(\ref{hat_mt})}{\underset{\mathfrak{Z}_0}{\asymp}}
$$
$$
\asymp 2^{-(\alpha_*+\gamma_*/p_0-\gamma_*/q)t(n)} \cdot
n^{-1/q+1/p_0}\cdot n^{1/q-1/2}
\stackrel{(\ref{tt2})}{\underset{\mathfrak{Z}_0}{\asymp}} n^{-\hat
\theta -1/2+1/q},
$$
$$
S_6(n)= 2^{-\alpha_*k_*t_{***}(n)}\cdot
2^{-m^{(q)}_{t_{***}(n)}(1/q-1/p_0)} n^{-1/2} \cdot
2^{\gamma_*k_*t_{***}(n)/q}\cdot 2^{m^{(q)}_{t_{***}(n)}/q}
\stackrel{(\ref{line_mt})}{\underset{\mathfrak{Z}_0}{\asymp}}
$$
$$
\asymp 2^{-(\alpha_*+\gamma_*/p_0-\gamma_*/q)t_{***}(n)} \cdot
n^{-(1/q-1/p_0)q/2}
\stackrel{(\ref{tssst2})}{\underset{\mathfrak{Z}_0}{\asymp}}
n^{-q\hat \theta/2}.
$$

{\bf The case $q>2$, $\max\{p_0, \, p_1\}=2$,
$\frac{1}{p_0}+\frac{1}{q}> 1$, $\frac{1}{p_1} +\frac{1}{q}< 1$.}
Here we argue as in the previous case, taking into account that
$m_t^{(p_1')} =\hat m_t$. Hence, we get the estimate
$S\underset{\mathfrak{Z}_0} {\lesssim} S_1(n) + S_2(n) + S_3(n) +
S_4(n)$ with
$$
S_1(n) \underset{\mathfrak{Z}_0}{\asymp} n^{-s_*-\frac 1q +\frac
12}, \quad S_2(n) \underset{\mathfrak{Z}_0}{\asymp} n^{-\hat
\theta -(1/p_0-1/2)(1-\tilde \theta/s_*)},
$$
$$
S_3(n) \underset{\mathfrak{Z}_0}{\asymp} n^{-\hat \theta
-1/2+1/q}, \quad S_4(n) \underset{\mathfrak{Z}_0}{\asymp}
n^{-q\hat \theta/2}.
$$

{\bf The case $q>2$, $\max\{p_0, \, p_1\}<2$,
$\frac{1}{p_0}+\frac{1}{q}< 1$, $\frac{1}{p_1} +\frac{1}{q}> 1$.}
Then $\frac{1}{p_0}< \frac{1}{q'} < \frac{1}{p_1}$. Let $\lambda
\in (0, \, 1)$ be given by (\ref{lam}). Then
\begin{align}
\label{nepr3} \begin{array}{c} 2^{\mu_*k_*t}\cdot
2^{-m_t(s_*+1/q-1/p_1)} \stackrel{(\ref{mt_t})}{=} \\ =
2^{((1-\lambda)\mu_*-\lambda \alpha_*)k_*t}\cdot
2^{-m_t((1-\lambda)(s_*+1/q-1/p_1)+\lambda(1/q-1/p_0))},\end{array}
\end{align}
\begin{align}
\label{nepr4} \begin{array}{c} 2^{-\alpha_*k_*t}\cdot 2^{-\tilde
m_t(1/q-1/p_0)} n^{-1/2} \cdot 2^{\gamma_*k_*t/p_0'}\cdot
2^{\tilde m_t/p_0'} \stackrel{(\ref{til_mt_t})}{=} \\ =
2^{((1-\lambda)\mu_*-\lambda \alpha_*)k_*t}\cdot 2^{-\tilde
m_t((1-\lambda)(s_*+1/q-1/p_1)+\lambda(1/q-1/p_0))}n^{-1/2}\cdot
2^{\gamma_*k_*t/q}\cdot 2^{\tilde m_t/q}.\end{array}
\end{align}

We define the numbers $m_t'$ by the equation
\begin{align}
\label{mtpr3} \begin{array}{c}2^{-\alpha_*k_*t}\cdot
2^{-m'_t(1/q-1/p_0)} =\\= 2^{((1-\lambda)\mu_*-\lambda
\alpha_*)k_*t}\cdot
2^{-m'_t((1-\lambda)(s_*+1/q-1/p_1)+\lambda(1/q-1/p_0))}
n^{-1/2}\cdot 2^{\gamma_*k_*t/q}\cdot 2^{m'_t/q}; \end{array}
\end{align}
the numbers $t_*(n)$, $t_{**}(n)$, $t_{***}(n)$, $t(n)$ are
defined by the equations
\begin{align}
\label{t_n_st3} \tilde m_{t_*(n)} = \hat m_{t_*(n)}, \quad \tilde
m_{t_{**}(n)} = m_{t_{**}(n)}^{(p_0')}, \quad m_{t_{***}(n)} =
m_{t_{***}(n)}^{(q)}, \quad m_{t(n)} = \hat m_{t(n)}.
\end{align}
Using (\ref{til_mt_t}), (\ref{mt_t}), (\ref{line_mt}),
(\ref{lam}), (\ref{mtpr3}), (\ref{t_n_st3}) we get
\begin{align}
\label{m_t_pr3} m'_{t_{**}(n)} = \tilde m_{t_{**}(n)}, \quad
m'_{t_{***}(n)} = m_{t_{***}(n)},
\end{align}
\begin{align}
\label{tst3}
2^{(\mu_*+\alpha_*+\gamma_*(s_*+1/p_0-1/p_1))k_*t_*(n)} = n^{s_*},
\end{align}
\begin{align}
\label{tsst3}
2^{(\mu_*+\alpha_*+\gamma_*(s_*+1/p_0-1/p_1))k_*t_{**}(n)} =
n^{s_*p_0'/2},
\end{align}
\begin{align}
\label{tssst3}
2^{(\mu_*+\alpha_*+\gamma_*(s_*+1/p_0-1/p_1))k_*t_{***}(n)} =
n^{(s_*+1/p_0-1/p_1)q/2},
\end{align}
\begin{align}
\label{tt3} 2^{(\mu_*+\alpha_*+\gamma_*(s_*+1/p_0-1/p_1))k_*t(n)}
= n^{s_*+1/p_0-1/p_1}.
\end{align}

Now we estimate the sum (\ref{sum}).

Let
$$
{\rm I} = \{(t, \, m):\; 0\le t\le t_{***}(n), \; m_t^*\le m\le
m_t^{(q)}, \; m\ge m_t\},
$$
$$
{\rm II} = \{(t, \, m):\; 0\le t\le t_{***}(n), \; m\ge m_t^*, \;
\tilde m_t\le m\le m_t, \; m\ge m_t'\},
$$
$$
{\rm III} = \{(t, \, m):\; 0\le t\le t_{***}(n), \; m\ge
m_t^{(q)}\},
$$
$$
{\rm IV} = \{(t, \, m):\; 0\le t\le t_{***}(n), \; m_t^*\le m\le
m_t^{(p_0')}, \; m\le \tilde m_t\},
$$
$$
{\rm V} = \{(t, \, m):\; 0\le t\le t_{***}(n), \; m\ge
m_t^{(p_0')}, \; m\le m_t'\}.
$$

In ${\rm I}$ and ${\rm III}$ we apply the inclusion
$W_{t,m}\subset 2^{\mu_*k_*t}\cdot
2^{-m(s_*+1/q-1/p_1)}B_{p_1}^{\nu_{t,m}}$, in ${\rm II}$ we use
(\ref{wkm_lq}), in ${\rm IV}$ and ${\rm V}$ we use the inclusion
$W_{t,m} \subset 2^{-\alpha_*k_*t}\cdot 2^{-m(1/q-1/p_0)}
B_{p_0}^{\nu_{t,m}}$.

Applying (\ref{p0lq_emb}), (\ref{tssst3}) and Theorem
\ref{gl_teor_lin}, we get
$$
S \underset{\mathfrak{Z}_0}{\lesssim} n^{-\hat \theta q/2}+\sum
\limits _{(t, \, m)\in {\rm I}} 2^{\mu_*k_*t}\cdot
2^{-m(s_*+1/q-1/p_1)} l_{t,m}^{-1/2} \cdot 2^{\gamma_*k_*t/q}\cdot
2^{m/q}+
$$
$$
+\sum \limits _{(t, \, m)\in {\rm II}} 2^{((1-\lambda)\mu_*
-\lambda\alpha_*)k_*t}\cdot 2^{-m((1-\lambda) (s_*+1/q-1/p_1)
+\lambda(1/q-1/p_0))}l_{t,m}^{-1/2} \cdot 2^{\gamma_*k_*t/q}\cdot
2^{m/q}+
$$
$$
+\sum \limits _{(t, \, m)\in {\rm III}} 2^{\mu_*k_*t}\cdot
2^{-m(s_*+1/q-1/p_1)} + \sum \limits _{(t, \, m)\in {\rm IV}}
2^{-\alpha_*k_*t}\cdot 2^{-m(1/q-1/p_0)} l_{t,m}^{-1/2} \cdot
2^{\gamma_*k_*t/p_0'}\cdot 2^{m/p_0'}+
$$
$$
+\sum \limits _{(t, \, m)\in {\rm V}} 2^{-\alpha_*k_*t}\cdot
2^{-m(1/q-1/p_0)} =:S'.
$$
By the hypotheses of the theorem, $\theta_{j_*}<\min _{j\ne
j_*}\theta_j$. In addition, (\ref{nepr3}), (\ref{nepr4}),
(\ref{mtpr3}), (\ref{t_n_st3}), (\ref{m_t_pr3}) hold. For
appropriate $t_1(n)$, $m_1(n)$ we get the estimate
$S'\underset{\mathfrak{Z}_0} {\lesssim} S_1(n) + S_2(n) + S_3(n) +
S_4(n) + S_5(n)+S_6(n)$, where
$$
S_1(n) =2^{-\hat m_0(s_*+1/q-1/p_1)} n^{-1/2} \cdot 2^{\hat m_0/q}
\stackrel{(\ref{hat_mt})}{=} n^{-s_*-\frac 12 +\frac{1}{p_1}},
$$
$$
S_2(n) =2^{-m_0^{(q)}(s_*+1/q-1/p_1)} n^{-1/2} \cdot
2^{m_0^{(q)}/q} \stackrel{(\ref{line_mt})}{=}
n^{-(s_*+1/q-1/p_1)q/2},
$$
$$
S_3(n) = 2^{-\alpha_*k_*t_*(n)}\cdot 2^{-\hat
m_{t_*(n)}(1/q-1/p_0)}n^{-1/2} \cdot
2^{\gamma_*k_*t_*(n)/p_0'}\cdot 2^{\hat m_{t_*(n)}/p_0'}
\stackrel{(\ref{hat_mt})}{\underset{\mathfrak{Z}_0}{\asymp}}
$$
$$
\asymp 2^{-(\alpha_*+\gamma_*/p_0-\gamma_*/q)k_*t_*(n)} \cdot
n^{-1/q+1/p_0}\cdot n^{-1/2+1/p_0'}
\stackrel{(\ref{eq1}),(\ref{tst3})
}{\underset{\mathfrak{Z}_0}{\asymp}} n^{-\hat \theta
-(1/p_1-1/p_0)\tilde \theta/s_* +1/2-1/p_0},
$$
$$
S_4(n) =2^{-\alpha_*k_*t_{**}(n)}\cdot
2^{-m^{(p_0')}_{t_{**}(n)}(1/q-1/p_0)}n^{-1/2} \cdot
2^{\gamma_*k_*t_{**}(n)/p_0'}\cdot 2^{m^{(p_0')}_{t_{**}(n)}/p_0'}
\stackrel{(\ref{line_mt})}{\underset{\mathfrak{Z}_0}{\asymp}}
$$
$$
\asymp 2^{-(\alpha_*+\gamma_*/p_0-\gamma_*/q)k_*t_{**}(n)} \cdot
n^{-(1/q-1/p_0)p_0'/2}
\stackrel{(\ref{eq1}),(\ref{tsst3})}{\underset{\mathfrak{Z}_0}{\asymp}}
n^{-(\hat \theta +(1/p_1-1/p_0)\tilde \theta/s_*)p_0'/2},
$$
$$
S_5(n) = 2^{\mu_*k_*t(n)}\cdot 2^{-\hat m_{t(n)}(s_*+1/q-1/p_1)}
n^{-1/2}\cdot 2^{\gamma_*k_*t(n)/q}\cdot 2^{\hat m_{t(n)}/q}
\stackrel{(\ref{hat_mt})}{\underset{\mathfrak{Z}_0}{\asymp}}
$$
$$
\asymp 2^{(\mu_*+\gamma_*(s_*+1/q-1/p_1))k_*t(n)}\cdot
n^{-s_*-1/q+1/p_1}\cdot n^{-1/2+1/q}
\stackrel{(\ref{tt3})}{\underset{\mathfrak{Z}_0}{\asymp}} n^{-\hat
\theta +1/q-1/2},
$$
$$
S_6(n) = 2^{\mu_*k_*t_{***}(n)}\cdot
2^{-m^{(q)}_{t_{***}(n)}(s_*+1/q-1/p_1)} n^{-1/2}\cdot
2^{\gamma_*k_*t_{***}(n)/q}\cdot 2^{m^{(q)}_{t_{***}(n)}/q}
\stackrel{(\ref{line_mt})}{\underset{\mathfrak{Z}_0}{\asymp}}
$$
$$
\asymp 2^{(\mu_*+\gamma_*(s_*+1/q-1/p_1))k_*t_{***}(n)}\cdot
n^{-(s_*+1/q-1/p_1)q/2}
\stackrel{(\ref{tssst3})}{\underset{\mathfrak{Z}_0}{\asymp}}
n^{-q\hat \theta/2}.
$$

{\bf The case $q>2$, $\max\{p_0, \, p_1\}=2$,
$\frac{1}{p_0}+\frac{1}{q}< 1$, $\frac{1}{p_1} +\frac{1}{q}> 1$.}
We argue as in the previous case, taking into account that $\hat
m_t = m^{(p_0')}_t$. Hence, we obtain the estimate
$S\underset{\mathfrak{Z}_0} {\lesssim} S_1(n) + S_2(n) + S_3(n) +
S_4(n) + S_5(n)$, where
$$
S_1(n) \underset{\mathfrak{Z}_0}{\asymp} n^{-s_*-\frac 12
+\frac{1}{p_1}}, \quad S_2(n) \underset{\mathfrak{Z}_0}{\asymp}
n^{-(s_*+1/q-1/p_1)q/2}, \quad S_3(n)
\underset{\mathfrak{Z}_0}{\asymp} n^{-\hat \theta
-(1/p_1-1/2)\tilde \theta/s_*},
$$
$$
S_4(n) \underset{\mathfrak{Z}_0}{\asymp} n^{-\hat \theta
+1/q-1/2}, \quad S_5(n) \underset{\mathfrak{Z}_0}{\asymp}
n^{-q\hat \theta/2}.
$$

{\bf The case $q>2$, $p_1<2<p_0<q$, $\frac{1}{p_1}+\frac{1}{q}\le
1$.} We define $m_t'$ by the equation
\begin{align}
\label{mtpr4} 2^{-\alpha_*k_*t}\cdot 2^{-m_t'(1/q-1/p_0)} =
2^{\mu_*k_*t}\cdot 2^{-m_t'(s_*+1/q-1/p_1)}\cdot n^{-1/2}\cdot
2^{\gamma_*k_*t/p_1'}\cdot 2^{m_t'/p_1'};
\end{align}
$t(n)$, $t_{***}(n)$ are defined by the equations
\begin{align}
\label{tn} \hat m_{t(n)} = m'_{t(n)}, \quad
m^{(p_1')}_{t_{***}(n)} = m'_{t_{***}(n)}.
\end{align}
Hence
\begin{align}
\label{2tsss} 2^{(\mu_*+\alpha_*+\gamma_*(s_*+1/p_0-1/p_1))t(n)}
\stackrel{(\ref{hat_mt})}{=} n^{s_*+1/p_0-1/2},
\end{align}
\begin{align}
\label{2tstar} 2^{(\mu_*+\alpha_*+\gamma_*(s_*+1/p_0-1/p_1))
t_{***}(n)} \stackrel{(\ref{mt_t}),(\ref{line_mt})}{=}
n^{(s_*+1/p_0-1/p_1)p_1'/2}.
\end{align}

Let us estimate the sum (\ref{sum}).

We set
$$
{\rm I} = \{(t, \, m):\; 0\le t\le t_{***}(n), \; m_t^*\le m\le
m_t^{(p_1')}, \; m\ge m_t'\},
$$
$$
{\rm II} =\{(t, \, m):\; 0\le t\le t_{***}(n), \; m\ge
m_t^{(p_1')}\},
$$
$$
{\rm III} =\{(t, \, m):\; 0\le t\le t_{***}(n), \; m_t^*\le m\le
m'_t\}.
$$
In ${\rm I}$ and ${\rm II}$ we use the inclusion $W_{t,m}\subset
2^{\mu_*k_*t}\cdot 2^{-m(s_*+1/q-1/p_1)}B_{p_1}^{\nu_{t,m}}$, in
${\rm III}$ we use the inclusion $W_{t,m} \subset
2^{-\alpha_*k_*t}\cdot 2^{-m(1/q-1/p_0)} B_{p_0}^{\nu_{t,m}}$.

Applying (\ref{p0lq_emb}), (\ref{2tstar}) and Theorem
\ref{gl_teor_lin}, we get
$$
S \underset{\mathfrak{Z}_0}{\lesssim} n^{-\hat \theta p_1'/2}+\sum
\limits _{(t, \, m)\in {\rm I}} 2^{\mu_*k_*t}\cdot
2^{-m(s_*+1/q-1/p_1)} l_{t,m}^{-1/2} \cdot
2^{\gamma_*k_*t/p_1'}\cdot 2^{m/p_1'}+
$$
$$
+\sum \limits _{(t, \, m)\in {\rm II}} 2^{\mu_*k_*t}\cdot
2^{-m(s_*+1/q-1/p_1)} + \sum \limits _{(t, \, m)\in {\rm III}}
2^{-\alpha_*k_*t}\cdot 2^{-m(1/q-1/p_0)} =:S'.
$$
In the second sum, there is a decreasing geometric progression in
$m$, and in the third sum, there is an increasing geometric
progression in $m$.

Notice that
\begin{align}
\label{eq2} \begin{array}{c}
-\frac{(\alpha_*+\gamma_*/p_0-\gamma_*/q)(s_*+1/p_0-1/2)}
{\mu_*+\alpha_*+\gamma_*(s_*+1/p_0-1/p_1)} -\frac 1q
+\frac{1}{p_0} = \\ =-\hat \theta -\left(\frac{1}{p_1}-\frac
12\right)\frac{\tilde \theta}{s_*}.\end{array}
\end{align}

By the hypotheses of the theorem, $\theta_{j_*}<\min _{j\ne
j_*}\theta_j$. In addition, (\ref{mtpr4}) and (\ref{tn}) hold.
Taking into account (\ref{eq2}), for appropriate $t_1(n)$,
$m_1(n)$ we get $S' \underset{\mathfrak{Z}_0} {\lesssim} S_1(n)+
S_2(n) +S_3(n) +S_4(n)$, where
$$
S_1(n) = 2^{-\hat m_0(s_*+1/q-1/p_1)} n^{-1/2} \cdot 2^{\hat
m_0/p_1'} \stackrel{(\ref{hat_mt})}{=} n^{-s_*-\frac 1q +\frac
12},
$$
$$
S_2(n) =2^{-m_0^{(p_1')}(s_*+1/q-1/p_1)} n^{-1/2} \cdot
2^{m_0^{(p_1')}/p_1'} \stackrel{(\ref{line_mt})}{=}
n^{-(s_*+1/q-1/p_1)p_1'/2},
$$
$$
S_3(n) = 2^{-\alpha_*k_*t(n)}\cdot 2^{-\hat m_{t(n)}(1/q-1/p_0)}
\stackrel{(\ref{hat_mt})}{\underset{\mathfrak{Z}_0}{\asymp}}
2^{-(\alpha_*+\gamma_*/p_0-\gamma_*/q)t(n)} \cdot n^{-1/q+1/p_0}
\stackrel{(\ref{2tsss}),(\ref{eq2})}{\underset{\mathfrak{Z}_0}{\asymp}}
n^{-\hat \theta-(1/p_1-1/2)\tilde \theta/s_*},
$$
$$
S_4(n) = 2^{\mu_*k_*t_{***}(n)}\cdot
2^{-m^{(p_1')}_{t_{***}(n)}(s_*+1/q-1/p_1)}
\stackrel{(\ref{line_mt})}{\underset{\mathfrak{Z}_0}{\asymp}}
$$$$\asymp 2^{(\mu_*+ \gamma_*(s_*+1/q-1/p_1))k_*t_{***}(n)}\cdot
n^{-(s_*+1/q-1/p_1)p_1'/2}
\stackrel{(\ref{2tstar})}{\underset{\mathfrak{Z}_0}{\asymp}}
n^{-\hat \theta p_1'/2}.
$$

{\bf The case $q>2$, $p_0<2<p_1<q$, $\frac{1}{p_0}+\frac{1}{q}\le
1$.} We define $m_t'$ by the equation
\begin{align}
\label{mtpr5} 2^{-\alpha_*k_*t}\cdot 2^{-m_t'(1/q-1/p_0)}\cdot
n^{-1/2}\cdot 2^{\gamma_*k_*t/p_0'}\cdot 2^{m_t'/p_0'} =
2^{\mu_*k_*t}\cdot 2^{-m_t'(s_*+1/q-1/p_1)};
\end{align}
$t(n)$, $t_{***}(n)$ are defined by the equations
\begin{align}
\label{tn_0} \hat m_{t(n)} = m'_{t(n)}, \quad
m^{(p_0')}_{t_{***}(n)} = m'_{t_{***}(n)}.
\end{align}
Hence
\begin{align}
\label{2tsss0} 2^{(\mu_*+\alpha_*+\gamma_*(s_*+1/p_0-1/p_1))t(n)}
\stackrel{(\ref{hat_mt})}{=} n^{s_*+1/2-1/p_1},
\end{align}
\begin{align}
\label{2tstar0} 2^{(\mu_*+\alpha_*+\gamma_*(s_*+1/p_0-1/p_1))
t_{***}(n)} \stackrel{(\ref{mt_t}), (\ref{line_mt})}{=}
n^{(s_*+1/p_0-1/p_1)p_0'/2}.
\end{align}

We estimate the sum (\ref{sum}).

Let
$$
{\rm I} = \{(t, \, m):\; 0\le t\le t_{***}(n), \; m_t^*\le m\le
m_t^{(p_0')}, \; m\ge m_t'\},
$$
$$
{\rm II} =\{(t, \, m):\; 0\le t\le t_{***}(n), \; m\ge
m_t^{(p_0')}\},
$$
$$
{\rm III} =\{(t, \, m):\; 0\le t\le t_{***}(n), \; m_t^*\le m\le
m'_t\}.
$$
In ${\rm I}$ and ${\rm II}$ we use the inclusion $W_{t,m}\subset
2^{\mu_*k_*t}\cdot 2^{-m(s_*+1/q-1/p_1)}B_{p_1}^{\nu_{t,m}}$, and
in ${\rm III}$ we use the inclusion $W_{t,m} \subset
2^{-\alpha_*k_*t}\cdot 2^{-m(1/q-1/p_0)} B_{p_0}^{\nu_{t,m}}$.

Applying (\ref{p0lq_emb}), (\ref{2tstar0}) and Theorem
\ref{gl_teor_lin}, we get
$$
S \underset{\mathfrak{Z}_0}{\lesssim} n^{-\hat \theta p_0'/2}+\sum
\limits _{(t, \, m)\in {\rm I}\cup {\rm II}} 2^{\mu_*k_*t}\cdot
2^{-m(s_*+1/q-1/p_1)}+
$$
$$
+ \sum \limits _{(t, \, m)\in {\rm III}} 2^{-\alpha_*k_*t}\cdot
2^{-m(1/q-1/p_0)} l_{t,m}^{-1/2} \cdot 2^{\gamma_*k_*t/p_0'}\cdot
2^{m/p_0'} =:S'.
$$
In the first sum, there is a decreasing geometric progression in
$m$, in the second sum, there is an increasing geometric
progression in $m$.

Notice that
\begin{align}
\label{eq3} \begin{array}{c}
\frac{(\mu_*+\gamma_*(s_*+1/q-1/p_1))(s_*+1/2-1/p_1)}
{\mu_*+\alpha_*+\gamma_*(s_*+1/p_0-1/p_1)} -s_*-\frac 1q
+\frac{1}{p_1} = \\ =-\hat \theta -\left(\frac{1}{p_0}-\frac
12\right)\left(1-\frac{\tilde \theta}{s_*}\right).\end{array}
\end{align}

By the hypotheses of the theorem, $\theta_{j_*}<\min _{j\ne
j_*}\theta_j$. In addition, (\ref{mtpr5}) and (\ref{tn_0}) hold.
Taking into account (\ref{eq3}), for appropriate $t_1(n)$,
$m_1(n)$ we obtain $S'\underset{\mathfrak{Z}_0}{\lesssim} S_1(n) +
S_2(n) + S_3(n)$, where
$$
S_1(n) = 2^{-\hat m_0(s_*+1/q-1/p_1)} \stackrel{(\ref{hat_mt})}{=}
n^{-s_*-1/q+1/p_1},
$$
$$
S_2(n) =2^{\mu_*k_*t(n)}\cdot 2^{-\hat m_{t(n)}(s_*+1/q-1/p_1)}
\stackrel{(\ref{hat_mt})}{\underset{\mathfrak{Z}_0}{\asymp}}
$$
$$
\asymp 2^{(\mu_*+\gamma_*(s_*+1/q-1/p_1))k_*t(n)}\cdot
n^{-s_*-1/q+1/p_1} \stackrel{(\ref{2tsss0}),
(\ref{eq3})}{\underset{\mathfrak{Z}_0}{\asymp}} n^{-\hat \theta
-(1/p_0-1/2)(1-\tilde \theta/s_*)},
$$
$$
S_3(n) = 2^{\mu_*k_*t_{***}(n)}\cdot
2^{-m^{(p_0')}_{t_{***}(n)}(s_*+1/q-1/p_1)}
\stackrel{(\ref{line_mt})}{\underset{\mathfrak{Z}_0}{\asymp}}
$$$$\asymp 2^{(\mu_*+ \gamma_*(s_*+1/q-1/p_1))k_*t_{***}(n)}\cdot
n^{-(s_*+1/q-1/p_1)p_0'/2}
\stackrel{(\ref{2tstar0})}{\underset{\mathfrak{Z}_0}{\asymp}}
n^{-\hat \theta p_0'/2}.
$$
\end{proof}

\section{Lower estimates for the linear widths of the classes $BX_{p_1}(\Omega)\cap BX_{p_0}(\Omega)$.}

As in \cite[\S 3]{vas_inters}, we suppose that there exist $c\ge
1$, $t_0\in \Z_+$ and functions $\varphi_j^{t,m}\in
X_{p_0}(\Omega) \cap X_{p_1}(\Omega)$ ($1\le j\le \nu_{t,m}$,
$t\ge t_0$, $m\in \Z_+$) with pairwise non-overlapping supports
such that
\begin{align}
\label{nu_tm} \nu_{t,m}=\lceil\nu'_{t,m}\rceil, \quad
\nu'_{t,m}=c^{-1}2^{\gamma_*k_*t}\cdot 2^m,
\end{align}
\begin{align}
\label{func_est} \begin{array}{c} \|\varphi_j^{t,m}\|
_{Y_q(\Omega)} = 1, \quad \|\varphi _j^{t,m}\| _{X_{p_0}(\Omega)}
\le c\cdot 2^{\alpha_*k_*t}\cdot2^{m\left(\frac 1q
-\frac{1}{p_0}\right)}, \\ \|\varphi _j^{t,m}\| _{X_{p_1}(\Omega)}
\le c\cdot 2^{-\mu_*k_*t}\cdot 2^{m\left(s_*+1/q-1/p_1\right)}.
\end{array}
\end{align}
We denote $\mathfrak{Z}_1 = (c, \, t_0, \, q, \, p_0, \, p_1,
\,k_*, \, s_*, \gamma_*, \, \alpha_*, \, \mu_*)$. The numbers
$\tilde \theta$ and $\hat \theta$ are defined by formula
(\ref{til_theta}), the numbers $j_0\in \N$ and $\theta_j\in \R$
($1\le j\le j_0$) are as in Definition \ref{theta_j} (we suppose
that one of its conditions holds).
\begin{Trm}
\label{low_est} Let (\ref{s1qp}), (\ref{mua}), (\ref{nu_tm}),
(\ref{func_est}) hold. Then
$$
\lambda_n(BX_{p_1}(\Omega)\cap BX_{p_0}(\Omega), \, Y_q(\Omega))
\underset{\mathfrak{Z}_1}{\gtrsim} n^{-\theta_j}, \quad 1\le j\le
j_0.
$$
\end{Trm}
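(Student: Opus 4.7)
The strategy is the usual Gluskin-type lower bound based on the atomic system $\{\varphi^{t,m}_j\}$: use the atoms to embed a scaled finite-dimensional $\ell_p$-ball into the class, then apply Theorem A or the Pietsch--Stesin identity (\ref{pietsch_stesin}) to bound the linear width of that ball in $\ell_q^N$ from below.

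\textbf{Step 1 (Reduction).} Fix $t\ge t_0$ and $m\in\Z_+$. Consider the linear map
$$
T_{t,m}\colon \R^{\nu_{t,m}}\to Y_q(\Omega),\qquad (a_j)\mapsto\sum_{j=1}^{\nu_{t,m}} a_j\varphi_j^{t,m}.
$$
Because the supports of the $\varphi_j^{t,m}$ are pairwise disjoint and each atom is normalized in $Y_q$, the operator $T_{t,m}$ acts as an isometry (up to a multiplicative constant depending only on $\mathfrak{Z}_1$) from $\ell_q^{\nu_{t,m}}$ onto its image. Moreover, by (\ref{func_est}) and the disjointness of supports, $T_{t,m}$ sends a positive multiple of the set $W_{t,m}$ from (\ref{w_tm}) into $BX_{p_0}(\Omega)\cap BX_{p_1}(\Omega)$. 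Since linear widths are preserved (up to constants) under such isometric embeddings and are monotone with respect to inclusion, one obtains
$$
\lambda_n\bigl(BX_{p_0}(\Omega)\cap BX_{p_1}(\Omega),\,Y_q(\Omega)\bigr)\underset{\mathfrak{Z}_1}{\gtrsim}\lambda_n\bigl(W_{t,m},\,\ell_q^{\nu_{t,m}}\bigr)
$$
for every admissible pair $(t,m)$.

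\textbf{Step 2 (Choosing $(t,m)$ for each $\theta_j$).} For each $j\in\{1,\dots,j_0\}$, the idea is to pick $(t_j(n),m_j(n))$ to coincide with one of the extremal configurations $(0,\hat m_0)$, $(0,m_0^{(p_1')})$, $(0,m_0^{(q)})$, $(t_*(n),\hat m_{t_*(n)})$, $(t_{**}(n),m_{t_{**}(n)}^{(p_1')})$, $(t_{**}(n),m_{t_{**}(n)}^{(p_0')})$, $(t_{***}(n),m_{t_{***}(n)}^{(p_0')})$, $(t_{***}(n),m_{t_{***}(n)}^{(p_1')})$, $(t_{***}(n),m_{t_{***}(n)}^{(q)})$, $(t(n),\hat m_{t(n)})$ appearing in the proof of Theorem \ref{trm}; these are exactly the configurations giving rise to the terms $S_1(n),\dots,S_6(n)$ there. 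One then observes that, for the chosen $(t_j(n),m_j(n))$, the set $W_{t_j(n),m_j(n)}$ contains a centered ball of the form $A_j\cdot B_{r_j}^{\nu_{t_j(n),m_j(n)}}$, with $r_j\in\{p_0,p_1\}$ dictated by which of the two constraints in (\ref{w_tm}) is binding at the chosen configuration, and with $A_j$ the corresponding scaling factor from (\ref{w_tm}). Applying either (\ref{glusk_ln})--(\ref{glusk_ln1}) or (\ref{pietsch_stesin}), and substituting the identities (\ref{hat_mt})--(\ref{line_mt}), (\ref{til_mt_t})--(\ref{mt_t}) together with the exponent identities (\ref{eq}), (\ref{eq1}), (\ref{eq2}), (\ref{eq3}), converts the resulting bound into $n^{-\theta_j}$. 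This is essentially the same algebraic computation as in Theorem \ref{trm} but read in reverse: the summand $S_j(n)$ there is now realized as a genuine lower bound for $\lambda_n$.

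\textbf{Step 3 (Case analysis and main obstacle).} The book\-keeping must be carried out for each of the twelve cases of Definition \ref{theta_j} and for each of the $j_0$ indices. For every $j$ one must verify (a) $\nu_{t_j(n),m_j(n)}\gtrsim n$ so that Theorem A applies in its nontrivial regime (for $\theta_j$ coming from a $\max\{1/q,1/r'\}$-type exponent in (\ref{glusk_ln})) or $\nu_{t_j(n),m_j(n)}\ge n+1$ for (\ref{pietsch_stesin}); and (b) that at the chosen scale the binding constraint in (\ref{w_tm}) is indeed the one giving rise to $B_{r_j}^{\nu_{t,m}}$. The delicate point lies in the mixed cases 5, 9, 10 of Definition \ref{theta_j}, where $\max\{p_0,p_1\}\le 2<q$ and an interpolation between the two $\ell_{p_i}$ constraints (governed by (\ref{lam})) determines the relevant exponent; for the terms $S_5(n),S_6(n)$ there, the value $\theta_j$ arises from the pair $(t(n),\hat m_{t(n)})$ or $(t_{***}(n),m_{t_{***}(n)}^{(q)})$ and the Gluskin lower bound in (\ref{glusk_ln}) with $1/r'=1/q$, and one must check that $\lambda \in (0,1)$ is such that the inclusion (\ref{wkm_lq}) is essentially sharp at the extremal $(t,m)$. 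This matching case-by-case is the main obstacle; once completed, each $\theta_j$ is produced individually, which yields the desired lower bound $n^{-\theta_j}$ for every $1\le j\le j_0$.
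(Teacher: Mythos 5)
Your Step 1 reduction coincides with the paper's inequality (\ref{low_lin}), and the general idea of realizing each $\theta_j$ by inscribing a scaled ball into $W_{t,m}$ at a suitable $(t,m)$ and invoking Theorem \ref{gl_teor_lin} or (\ref{pietsch_stesin}) is indeed how most of the exponents are obtained. But as a plan it has two concrete gaps. First, the exponent $\tilde\theta$ (which is one of the $\theta_j$ in cases 1, 2 and 4 of Definition \ref{theta_j}) is not produced by your recipe: none of the configurations behind $S_1(n),\dots,S_6(n)$ yields $n^{-\tilde\theta}$, and the paper obtains it separately via $\lambda_n\ge d_n$ together with the Kolmogorov-width lower bound (95) of \cite{vas_inters} (see (\ref{l4})). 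Second, and more seriously, your restriction to balls $A_j B_{r_j}^{\nu}$ with $r_j\in\{p_0,\,p_1\}$ cannot give the sharp exponents in cases 11 and 12. There $p_0$ and $p_1$ lie on opposite sides of $2$, and the relevant lower bound comes from inscribing a \emph{Euclidean} ball $k(n)B_2^{\nu(n)}$ into the intersection $k_1(n)B_{p_1}^{\nu(n)}\cap k_0(n)B_{p_0}^{\nu(n)}$ and using $\lambda_n(B_2^N,\,l_q^N)\asymp 1$ from (\ref{glusk_ln1}); see (\ref{ovr_mt})--(\ref{ll}) and (\ref{ovr_mt1})--(\ref{ll1}). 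If instead you shrink the $B_{p_0}$-ball until it fits inside the $B_{p_1}$-ball (or vice versa), you pay a factor $\nu^{1/p_1-1/p_0}$ rather than the smaller $\nu^{1/p_1-1/2}$ (resp.\ $\nu^{1/p_0-1/2}$), and the resulting exponent is strictly worse than $\theta_3$ of case 11 (resp.\ $\theta_2$ of case 12).

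A further point you leave implicit is that "which constraint is binding'' has to be made quantitative: the paper uses two different balance equations, (\ref{2akt}) for the inclusion of one scaled ball in the other without a volume factor, and (\ref{2akt1}) for the reverse inclusion which costs the factor $\nu_{t,m}^{1/p_0-1/p_1}$; these give the $\hat\theta$-type and the $\hat\theta+(\cdot)$-type exponents respectively, and both are needed (e.g.\ (\ref{l5}) versus (\ref{11})--(\ref{22})). Finally, your remark about verifying sharpness of the interpolation inclusion (\ref{wkm_lq}) in cases 5, 9, 10 is a red herring: (\ref{wkm_lq}) is used only for the upper bound, and the lower bounds in those cases again come from single $\ell_{p_0}$- or $\ell_{p_1}$-balls at the balance points $m_t$, $\tilde m_t$ with $\nu'_{t,[m_t]}\asymp n$ or $\asymp n^{q/2}$. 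So the skeleton of your argument is the right one, but without the $d_n$ step for $\tilde\theta$ and the inscribed Euclidean ball for cases 11--12 the proof cannot be completed as described.
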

\begin{proof}
The set $W_{t,m}$ is defined by (\ref{w_tm}).

As in \cite[p. 28]{vas_inters}, we get that
\begin{align}
\label{low_lin} \lambda_n(BX_{p_1}(\Omega)\cap BX_{p_0}(\Omega),
\, Y_q(\Omega)) \underset{\mathfrak{Z}_1}{\gtrsim}
\lambda_n(W_{t,m}, \, l_q^{\nu_{t,m}}).
\end{align}
We take $t=t_0$. Since $s_*>0$,
$s_*+\frac{1}{p_0}-\frac{1}{p_1}>0$, there is $\hat c=\hat
c(\mathfrak{Z}_1)$ such that for sufficiently large $m$ the
inclusion $\hat c\cdot 2^{-m(s_*+1/q-1/p_1)}\cdot
B_{p_1}^{\nu_{t_0,m}}\subset W_{t_0,m}$ holds. We take $m$ such
that $2n\le \nu'_{t_0,m} \underset{\mathfrak{Z}_1}{\lesssim} n$,
and for large $n$ we obtain the estimate
\begin{align}
\label{l1} \lambda_n(BX_{p_1}(\Omega)\cap BX_{p_0}(\Omega), \,
Y_q(\Omega))
\stackrel{(\ref{nu_tm}),(\ref{low_lin})}{\underset{\mathfrak{Z}_1}{\gtrsim}}
n^{-s_*-1/q+1/p_1} \lambda_n (B_{p_1}^{2n}, \, l_q^{2n}).
\end{align}
Let $p_1<2<q$. If $\frac 1q +\frac{1}{p_1}\ge 1$, we take $m$ such
that $2n^{q/2}\le \nu'_{t_0,m} \underset{\mathfrak{Z}_1}
{\lesssim} n^{q/2}$ and get
\begin{align}
\label{l2} \lambda_n(BX_{p_1}(\Omega)\cap BX_{p_0}(\Omega), \,
Y_q(\Omega)) \stackrel{(\ref{glusk_ln})}
{\underset{\mathfrak{Z}_1}{\gtrsim}} n^{-(s_*+1/q-1/p_1)q/2}.
\end{align}
If $\frac 1q +\frac{1}{p_1}\le 1$, we take $m$ such that
$2n^{p_1'/2}\le \nu'_{t_0,m} \underset{\mathfrak{Z}_1} {\lesssim}
n^{p_1'/2}$, and obtain
\begin{align}
\label{l3} \lambda_n(BX_{p_1}(\Omega)\cap BX_{p_0}(\Omega), \,
Y_q(\Omega)) \stackrel{(\ref{glusk_ln})}
{\underset{\mathfrak{Z}_1}{\gtrsim}} n^{-(s_*+1/q-1/p_1)p_1'/2}.
\end{align}

Now we apply (95) from \cite{vas_inters} and get
\begin{align}
\label{l4} \lambda_n(BX_{p_1}(\Omega)\cap BX_{p_0}(\Omega), \,
Y_q(\Omega)) \ge d_n(BX_{p_1}(\Omega)\cap BX_{p_0}(\Omega), \,
Y_q(\Omega)) \underset{\mathfrak{Z}_1}{\gtrsim} n^{-\tilde
\theta}.
\end{align}

Further we write $A\underset{\mathfrak{Z}_1}{\subset}B$ or
$B\underset{\mathfrak{Z}_1}{\supset}A$ if there exists
$\overline{c}(\mathfrak{Z}_1)\ge 1$ such that $A\subset
\overline{c}(\mathfrak{Z}_1)B$.

Let $m_t$ be defined by the equation
\begin{align}
\label{2akt} 2^{-\alpha_*k_*t}\cdot 2^{m_t(1/p_0-1/q)} =
2^{\mu_*k_*t}\cdot 2^{-m_t(s_*+1/q-1/p_1)}.
\end{align}

If $p_1\le p_0$, we have $2^{\mu_*k_*t}\cdot
2^{-m_t(s_*+1/q-1/p_1)}
B^{\nu_{t,[m_t]}}_{p_1}\underset{\mathfrak{Z}_1}{\subset}
2^{-\alpha_*k_*t}\cdot 2^{m_t(1/p_0-1/q)}
B^{\nu_{t,[m_t]}}_{p_0}$; hence,
\begin{align}
\label{ln} \lambda_n(W_{t,[m_t]}, \, l_q^{\nu_{t,[m_t]}})
\underset{\mathfrak{Z}_1}{\asymp} 2^{\mu_*k_*t}\cdot
2^{-m_t(s_*+1/q-1/p_1)} \lambda_n(B_{p_1}^{\nu_{t,[m_t]}}, \,
l_q^{\nu_{t,[m_t]}}).
\end{align}
If, in addition, $q\le 2$ or $p_1\ge 2$, we take $t$ such that
$2n\le \nu'_{t,[m_t]} \underset{\mathfrak{Z}_1}{\lesssim} n$;
applying (\ref{til_theta}), (\ref{glusk_ln1}),
(\ref{pietsch_stesin}), (\ref{nu_tm}), (\ref{low_lin}),
(\ref{2akt}), (\ref{ln}), we get the estimate
\begin{align}
\label{l5} \lambda_n(BX_{p_1}(\Omega)\cap BX_{p_0}(\Omega), \,
Y_q(\Omega)) \underset{\mathfrak{Z}_1}{\gtrsim} n^{-\hat \theta}.
\end{align}

If $p_1\ge p_0$, we have $2^{\mu_*k_*t}\cdot
2^{-m_t(s_*+1/q-1/p_1)}
B^{\nu_{t,[m_t]}}_{p_1}\underset{\mathfrak{Z}_1}{\supset}
2^{-\alpha_*k_*t}\cdot 2^{m_t(1/p_0-1/q)}
B^{\nu_{t,[m_t]}}_{p_0}$; hence,
\begin{align}
\label{ln1} \lambda_n(W_{t,[m_t]}, \, l_q^{\nu_{t,[m_t]}})
\underset{\mathfrak{Z}_1}{\asymp} 2^{-\alpha_*k_*t}\cdot
2^{m_t(1/p_0-1/q)} \lambda_n (B_{p_0}^{\nu_{t,[m_t]}}, \,
l_q^{\nu_{t,[m_t]}}).
\end{align}
If, in addition, $q\le 2$ or $p_0\ge 2$, we take $t$ such that
$2n\le \nu'_{t,[m_t]} \underset{\mathfrak{Z}_1}{\lesssim} n$;
applying (\ref{til_theta}), (\ref{glusk_ln1}),
(\ref{pietsch_stesin}), (\ref{nu_tm}), (\ref{low_lin}),
(\ref{2akt}), (\ref{ln1}), we obtain (\ref{l5}).

By (\ref{glusk_ln}), (\ref{glusk_ln1}), (\ref{pietsch_stesin}),
(\ref{l1}), (\ref{l4}), (\ref{l5}), we get the desired estimates
for cases 1--4 from Definition \ref{theta_j}.

\smallskip

{\bf The case $q>2$, $\frac 1q +\frac{1}{p_0}\ge 1$, $\frac 1q
+\frac{1}{p_1}\ge 1$.} For $p_1\le p_0$ we apply (\ref{ln}), for
$p_1\ge p_0$ we use (\ref{ln1}). In addition, we apply
(\ref{til_theta}), (\ref{glusk_ln}), (\ref{nu_tm}),
(\ref{low_lin}), (\ref{2akt}). We take $t$ such that $2n\le
\nu'_{t,[m_t]} \underset{\mathfrak{Z}_1}{\lesssim} n$ and get
\begin{align}
\label{00} \lambda_n(BX_{p_1}(\Omega)\cap BX_{p_0}(\Omega), \,
Y_q(\Omega)) \underset{\mathfrak{Z}_1}{\gtrsim} n^{-\hat
\theta-1/2+1/q};
\end{align}
taking $t$ such that $2n^{q/2}\le \nu'_{t,[m_t]}
\underset{\mathfrak{Z}_1}{\lesssim} n^{q/2}$, we get
\begin{align}
\label{000} \lambda_n(BX_{p_1}(\Omega)\cap BX_{p_0}(\Omega), \,
Y_q(\Omega)) \underset{\mathfrak{Z}_1}{\gtrsim} n^{-q\hat
\theta/2}.
\end{align}
This together with (\ref{glusk_ln}), (\ref{l1}), (\ref{l2}) gives
the lower estimate in case 5 of Definition \ref{theta_j}.

\smallskip

Before considering the other cases, we define $\tilde m_t$ by the
equation
\begin{align}
\label{2akt1} 2^{-\alpha_*k_*t}\cdot 2^{\tilde m_t(1/p_0-1/q)} =
2^{\mu_*k_*t}\cdot 2^{-\tilde m_t(s_*+1/q-1/p_1)}\cdot
2^{(1/p_0-1/p_1)\gamma_*k_*t}\cdot2^{\tilde m_t(1/p_0-1/p_1)}.
\end{align}

If $p_1\le p_0$, we have $2^{\mu_*k_*t}\cdot 2^{-\tilde
m_t(s_*+1/q-1/p_1)} B^{\nu_{t,[\tilde
m_t]}}_{p_1}\underset{\mathfrak{Z}_1}{\supset}
2^{-\alpha_*k_*t}\cdot 2^{\tilde m_t(1/p_0-1/q)} B^{\nu_{t,[\tilde
m_t]}}_{p_0}$; hence,
\begin{align}
\label{ln2} \lambda_n(W_{t,[\tilde m_t]}, \, l_q^{\nu_{t,[\tilde
m_t]}}) \underset{\mathfrak{Z}_1}{\asymp} 2^{-\alpha_*k_*t}\cdot
2^{\tilde m_t(1/p_0-1/q)} \lambda_n (B_{p_0}^{\nu_{t,[\tilde
m_t]}}, \, l_q^{\nu_{t,[\tilde m_t]}}).
\end{align}

If $p_1\ge p_0$, we have $2^{\mu_*k_*t}\cdot 2^{-\tilde
m_t(s_*+1/q-1/p_1)} B^{\nu_{t,[\tilde
m_t]}}_{p_1}\underset{\mathfrak{Z}_1}{\subset}
2^{-\alpha_*k_*t}\cdot 2^{\tilde m_t(1/p_0-1/q)} B^{\nu_{t,[\tilde
m_t]}}_{p_0}$; therefore,
\begin{align}
\label{ln3} \lambda_n(W_{t,[\tilde m_t]}, \, l_q^{\nu_{t,[\tilde
m_t]}}) \underset{\mathfrak{Z}_1}{\asymp} 2^{\mu_*k_*t}\cdot
2^{-\tilde m_t(s_*+1/q-1/p_1)} \lambda_n(B_{p_1}^{\nu_{t,[\tilde
m_t]}}, \, l_q^{\nu_{t,[\tilde m_t]}}).
\end{align}

{\bf The case $q>2$, $p_0<2<q$, $p_1\le 2<q$, $\frac 1q
+\frac{1}{p_0}\le 1$, $\frac 1q +\frac{1}{p_1}\le 1$, $p_0\le
p_1$.} We apply the estimate (\ref{ln3}), taking $2n\le
\nu'_{t,[\tilde m_t]} \underset{\mathfrak{Z}_1}{\lesssim} n$,
$2n^{p_1'/2}\le \nu'_{t,[\tilde m_t]}
\underset{\mathfrak{Z}_1}{\lesssim} n^{p_1'/2}$, and applying
(\ref{eq}). This together with (\ref{glusk_ln}), (\ref{nu_tm}),
(\ref{low_lin}), (\ref{2akt1}) yields
\begin{align}
\label{11} \lambda_n(BX_{p_1}(\Omega)\cap BX_{p_0}(\Omega), \,
Y_q(\Omega)) \underset{\mathfrak{Z}_1}{\gtrsim} n^{-\hat
\theta-(1/p_0-1/p_1)(1-\tilde \theta/s_*)-1/p_1+1/2},
\end{align}
\begin{align}
\label{22} \lambda_n(BX_{p_1}(\Omega)\cap BX_{p_0}(\Omega), \,
Y_q(\Omega)) \underset{\mathfrak{Z}_1}{\gtrsim} n^{-(\hat \theta+
(1/p_0-1/p_1)(1-\tilde \theta/s_*))p_1'/2}.
\end{align}
Now we use the estimate (\ref{ln1}), taking $2n^{p_0'/2}\le
\nu'_{t,[m_t]} \underset{\mathfrak{Z}_1}{\lesssim} n^{p_0'/2}$.
This together with (\ref{glusk_ln}), (\ref{nu_tm}),
(\ref{low_lin}), (\ref{2akt}) implies
\begin{align}
\label{55} \lambda_n(BX_{p_1}(\Omega)\cap BX_{p_0}(\Omega), \,
Y_q(\Omega)) \underset{\mathfrak{Z}_1}{\gtrsim} n^{-\hat\theta
p_0'/2}.
\end{align}
From (\ref{l1}), (\ref{l3}), (\ref{11}), (\ref{22}), (\ref{55}) we
obtain the lower estimate in cases 6 and 8 of Definition
\ref{theta_j}.

{\bf The case $q>2$, $p_0\le 2<q$, $p_1<2<q$, $\frac 1q
+\frac{1}{p_0}\le 1$, $\frac 1q +\frac{1}{p_1}\le 1$, $p_0\ge
p_1$.} We apply (\ref{ln2}), taking $2n\le \nu'_{t,[\tilde m_t]}
\underset{\mathfrak{Z}_1}{\lesssim} n$, $2n^{p_0'/2}\le
\nu'_{t,[\tilde m_t]} \underset{\mathfrak{Z}_1}{\lesssim}
n^{p_0'/2}$, and applying (\ref{eq1}). This together with
(\ref{glusk_ln}), (\ref{nu_tm}), (\ref{low_lin}), (\ref{2akt1})
yields
\begin{align}
\label{33} \lambda_n(BX_{p_1}(\Omega)\cap BX_{p_0}(\Omega), \,
Y_q(\Omega)) \underset{\mathfrak{Z}_1}{\gtrsim} n^{-\hat
\theta-(1/p_1-1/p_0)\tilde \theta/s_*-1/p_0+1/2},
\end{align}
\begin{align}
\label{44} \lambda_n(BX_{p_1}(\Omega)\cap BX_{p_0}(\Omega), \,
Y_q(\Omega)) \underset{\mathfrak{Z}_1}{\gtrsim} n^{-(\hat \theta+
(1/p_1-1/p_0)\tilde \theta/s_*)p_0'/2}.
\end{align}
Now we apply (\ref{ln}), taking $2n^{p_1'/2}\le \nu'_{t,[m_t]}
\underset{\mathfrak{Z}_1}{\lesssim} n^{p_1'/2}$. We use
(\ref{glusk_ln}), (\ref{nu_tm}), (\ref{low_lin}), (\ref{2akt}) and
get
\begin{align}
\label{66} \lambda_n(BX_{p_1}(\Omega)\cap BX_{p_0}(\Omega), \,
Y_q(\Omega)) \underset{\mathfrak{Z}_1}{\gtrsim} n^{-\hat\theta
p_1'/2}.
\end{align}
From (\ref{l1}), (\ref{l3}), (\ref{33}), (\ref{44}), (\ref{66}) we
obtain the lower estimate in cases 7 and 8 of Definition
\ref{theta_j}.

{\bf The case $q>2$, $p_0<2<q$, $p_1\le 2<q$, $\frac 1q
+\frac{1}{p_0}> 1$, $\frac 1q +\frac{1}{p_1}< 1$.} Then $p_0<
p_1$. We use (\ref{ln3}), taking $2n\le \nu'_{t,[\tilde m_t]}
\underset{\mathfrak{Z}_1}{\lesssim} n$, $2n^{p_1'/2}\le
\nu'_{t,[\tilde m_t]} \underset{\mathfrak{Z}_1}{\lesssim}
n^{p_1'/2}$, and applying (\ref{eq}). Hence, we get (\ref{11}),
(\ref{22}). Further, we use (\ref{ln1}) with $2n\le \nu'_{t,[m_t]}
\underset{\mathfrak{Z}_1}{\lesssim} n$ and $2n^{q/2}\le
\nu'_{t,[m_t]} \underset{\mathfrak{Z}_1}{\lesssim} n^{q/2}$. We
obtain (\ref{00}), (\ref{000}). This together with (\ref{l1}),
(\ref{l3}) yields the estimate in case 9 of Definition
\ref{theta_j}.

{\bf The case $q>2$, $p_0\le 2<q$, $p_1<2<q$, $\frac 1q
+\frac{1}{p_0}< 1$, $\frac 1q +\frac{1}{p_1}> 1$.} Then $p_0>
p_1$. We use (\ref{ln2}) with $2n\le \nu'_{t,[\tilde m_t]}
\underset{\mathfrak{Z}_1}{\lesssim} n$, $2n^{p_0'/2}\le
\nu'_{t,[\tilde m_t]} \underset{\mathfrak{Z}_1}{\lesssim}
n^{p_0'/2}$, and take into account (\ref{eq1}). Hence, we get
(\ref{33}), (\ref{44}). Further, we use (\ref{ln}) with $2n\le
\nu'_{t,[m_t]} \underset{\mathfrak{Z}_1}{\lesssim} n$ and
$2n^{q/2}\le \nu'_{t,[m_t]} \underset{\mathfrak{Z}_1}{\lesssim}
n^{q/2}$. Now, we get (\ref{00}), (\ref{000}). This together with
(\ref{l1}), (\ref{l2}) gives the lower estimate for case 10 of
Definition \ref{theta_j}.

{\bf The case $p_1<2<p_0<q$, $\frac 1q+\frac{1}{p_1}\le 1$.} We
have $p_1<p_0$. Applying (\ref{ln}) with $2n^{p_1'/2}\le
\nu'_{t,[m_t]}\underset{\mathfrak{Z}_1}{\lesssim} n^{p_1'/2}$, we
get (\ref{66}).

Let $\overline{m}_t$ be defined by the equation
\begin{align}
\label{ovr_mt} 2^{-\alpha_*k_*t}\cdot
2^{-\overline{m}_t(1/q-1/p_0)} = 2^{\mu_*k_*t}\cdot
2^{-\overline{m}_t(s_*+1/q-1/p_1)}n^{-1/2}\cdot
2^{\gamma_*k_*t/p_1'}\cdot 2^{\overline{m}_t/p_1'}.
\end{align}
We take $t(n)$ such that
\begin{align}
\label{nu_n} 2n\le \nu'_{t(n),[\overline{m}_{t(n)}]}
\underset{\mathfrak{Z}_1}{\lesssim} n.
\end{align}
Let
\begin{align}
\label{kin} \begin{array}{c} k_0(n)=2^{-\alpha_*k_*t(n)}\cdot
2^{-\overline{m}_{t(n)}(1/q-1/p_0)}, \\ k_1(n)
=2^{\mu_*k_*t(n)}\cdot 2^{-\overline{m}_{t(n)}(s_*+1/q-1/p_1)}, \;
\nu(n)=[\nu'_{t(n),[\overline{m}_{t(n)}]}].\end{array}
\end{align}
By (\ref{eq2}), (\ref{nu_tm}), (\ref{ovr_mt}), (\ref{nu_n}), we
get
\begin{align}
\label{k0n} k_0(n) \underset{\mathfrak{Z}_1}{\asymp} n^{-\hat
\theta-(1/p_1-1/2)\tilde \theta/s_*}.
\end{align}
We prove the estimate
\begin{align}
\label{mm} \lambda_n(k_1(n)B_{p_1}^{\nu(n)}\cap
k_0(n)B_{p_0}^{\nu(n)}, \, l_q^{\nu(n)})
\underset{\mathfrak{Z}_1}{\gtrsim} k_0(n).
\end{align}
This together with (\ref{low_lin}), (\ref{k0n}) implies that
\begin{align}
\label{ll} \lambda_n(BX_{p_1}(\Omega)\cap BX_{p_0}(\Omega), \,
Y_q(\Omega)) \underset{\mathfrak{Z}_1}{\gtrsim} n^{-\hat
\theta-(1/p_1-1/2)\tilde \theta/s_*}.
\end{align}

In order to prove (\ref{mm}), it is sufficient to check the
inclusion
$$
 k_0(n)B_2^{\nu(n)}
\underset{\mathfrak{Z}_1}{\subset} k_1(n)B_{p_1}^{\nu(n)}\cap
k_0(n)B_{p_0}^{\nu(n)}
$$
and apply (\ref{glusk_ln1}). Since $p_0>2$, $k_0(n)B_2^{\nu(n)}
\subset k_0(n)B_{p_0}^{\nu(n)}$. We show that $k_0(n)B_2^{\nu(n)}
\underset{\mathfrak{Z}_1}{\subset} k_1(n)B_{p_1}^{\nu(n)}$. To
this end, it suffices to check that
$k_1(n)\underset{\mathfrak{Z}_1}{\gtrsim}
k_0(n)\nu(n)^{1/p_1-1/2}$; this holds by (\ref{nu_tm}),
(\ref{ovr_mt}), (\ref{nu_n}) and (\ref{kin}). This completes the
proof of (\ref{ll}).

From (\ref{l1}), (\ref{l3}), (\ref{66}) and (\ref{ll}) we get the
lower estimates in case 11 of Definition \ref{theta_j}.

{\bf The case $p_0<2<p_1<q$, $\frac 1q+\frac{1}{p_0}\le 1$.} We
have $p_1>p_0$. Applying (\ref{ln1}) with $2n^{p_0'/2}\le
\nu'_{t,[m_t]}\underset{\mathfrak{Z}_1}{\lesssim} n^{p_0'/2}$, we
get (\ref{55}).

Let $\overline{m}_t$ be defined by the equation
\begin{align}
\label{ovr_mt1} 2^{-\alpha_*k_*t}\cdot
2^{-\overline{m}_t(1/q-1/p_0)}n^{-1/2}\cdot
2^{\gamma_*k_*t/p_0'}\cdot 2^{\overline{m}_t/p_0'} =
2^{\mu_*k_*t}\cdot 2^{-\overline{m}_t(s_*+1/q-1/p_1)};
\end{align}
$t(n)$ is such as in (\ref{nu_n}). We use notation (\ref{kin}).

From (\ref{eq3}), (\ref{nu_tm}), (\ref{nu_n}) and (\ref{ovr_mt1})
it follows that
\begin{align}
\label{k1n} k_1(n) \underset{\mathfrak{Z}_1}{\asymp} n^{-\hat
\theta-(1/p_0-1/2)(1-\tilde \theta/s_*)}.
\end{align}

We prove the estimate
\begin{align}
\label{lnk1} \lambda_n(k_1(n)B_{p_1}^{\nu(n)}\cap
k_0(n)B_{p_0}^{\nu(n)}, \, l_q^{\nu(n)})
\underset{\mathfrak{Z}_1}{\gtrsim} k_1(n).
\end{align}
This together with (\ref{low_lin}), (\ref{k1n}) implies that
\begin{align}
\label{ll1} \lambda_n(BX_{p_1}(\Omega)\cap BX_{p_0}(\Omega), \,
Y_q(\Omega)) \underset{\mathfrak{Z}_1}{\gtrsim} n^{-\hat
\theta-(1/p_0-1/2)(1-\tilde \theta/s_*)}.
\end{align}

In order to prove (\ref{lnk1}), it is sufficient to check the
inclusion
$$
 k_1(n)B_2^{\nu(n)} \underset{\mathfrak{Z}_1}{\subset}
k_1(n)B_{p_1}^{\nu(n)}\cap k_0(n)B_{p_0}^{\nu(n)}
$$
and apply (\ref{glusk_ln1}). Since $p_1>2$, we have
$k_1(n)B_2^{\nu(n)} \subset k_1(n)B_{p_1}^{\nu(n)}$. We show that
$k_1(n)B_2^{\nu(n)} \underset{\mathfrak{Z}_1}{\subset}
k_0(n)B_{p_0}^{\nu(n)}$. To this end, it is sufficient to check
that $k_0(n)\underset{\mathfrak{Z}_1}{\gtrsim}
k_1(n)\nu(n)^{1/p_0-1/2}$; it holds by (\ref{nu_tm}),
(\ref{nu_n}), (\ref{kin}), (\ref{ovr_mt1}). This completes the
proof of (\ref{ll1}).

From (\ref{l1}), (\ref{55}) and (\ref{ll1}) we get the lower
estimates for case 12 of Definition \ref{theta_j}.
\end{proof}

\begin{Biblio}
\bibitem{vas_inters} A.A. Vasil'eva, ``Kolmogorov widths of weighted Sobolev classes on a multi-dimensional domain with conditions on the derivatives of
order $r$ and zero'', arXiv:2004.06013v2.

\bibitem{trieb_mat_sb} H. Triebel, ``Interpolation properties of $\varepsilon$-entropy and diameters.
Geometric characteristics of imbedding for function spaces of
Sobolev–Besov type'', {\it Math. USSR-Sb.}, {\bf 27}:1 (1975),
23--37.

\bibitem{triebel} H. Triebel, {\it Interpolation theory. Function spaces. Differential
operators}. Mir, Moscow, 1980.

\bibitem{lo1} P.I. Lizorkin, M. Otelbaev, ``Imbedding theorems and compactness
for spaces of Sobolev type with weights'', {\it Math. USSR-Sb.}
{\bf 36}:3 (1980), 331--349.

\bibitem{lo2} P.I. Lizorkin, M. Otelbaev, ``Imbedding theorems and
compactness for spaces of Sobolev type with weights. II'', {\it
Math. USSR-Sb.} {\bf 40}:1 (1981), 51--77.

\bibitem{lo3} P.I. Lizorkin, M. Otelbaev, ``Estimates of approximate numbers
of the imbedding operators for spaces of Sobolev type with
weights'', {\it Proc. Steklov Inst. Math.}, {\bf 170} (1987),
245--266.

\bibitem{myn_otel} K.~Mynbaev, M.~Otelbaev, {\it Weighted function spaces and the
spectrum of differential operators.} Nauka, Moscow, 1988.

\bibitem{boy_1} I.V.~Boykov, ``Approximation of some classes
of functions by local splines'', {\it Comput. Math. Math. Phys.}
{\bf 38}:1 (1998), 21--29.

\bibitem{ait_kus1} M.S.~Aitenova, L.K.~Kusainova, ``On the asymptotics of the distribution of approximation
numbers of embeddings of weighted Sobolev classes. I'', {\it Mat.
Zh.} {\bf 2}:1 (2002), 3--9.
\bibitem{ait_kus2} M.S.~Aitenova, L.K.~Kusainova, ``On the asymptotics of the distribution of approximation
numbers of embeddings of weighted Sobolev classes. II'', {\it Mat.
Zh.} {\bf 2}:2 (2002), 7--14.

\bibitem{triebel12} H. Triebel, ``Entropy and approximation numbers of limiting embeddings, an approach via Hardy
inequalities and quadratic forms''. {\it J. Approx. Theory}. {\bf
164} (2012), no. 1, 31--46.

\bibitem{mieth1} T. Mieth, ``Entropy and approximation numbers of embeddings of
weighted Sobolev spaces''. {\it J. Appr. Theory}. {\bf 192}
(2015), 250--272.

\bibitem{mieth2} T. Mieth, ``Entropy and approximation numbers of weighted
Sobolev spaces via bracketing''. {\it J. Funct. Anal.} {\bf 270}
(2016), 4322--4339.

\bibitem{vas_width_raspr} A.A. Vasil'eva, ``Widths of function classes on sets with tree-like
structure'', {\it J. Appr. Theory}, {\bf 192} (2015), 19--59.

\bibitem{bib_gluskin} E.D. Gluskin, ``Norms of random matrices and diameters
of finite-dimensional sets'', {\it Math. USSR-Sb.}, {\bf 48}:1
(1984), 173--182.

\bibitem{pietsch1} A. Pietsch, ``$s$-numbers of operators in Banach space'', {\it Studia Math.},
{\bf 51} (1974), 201--223.

\bibitem{stesin} M.I. Stesin, ``Aleksandrov diameters of finite-dimensional sets
and of classes of smooth functions'', {\it Dokl. Akad. Nauk SSSR},
{\bf 220}:6 (1975), 1278--1281 [Soviet Math. Dokl.].

\bibitem{galeev1} E.M.~Galeev, ``The Kolmogorov diameter of the intersection of classes of periodic
functions and of finite-dimensional sets'', {\it Math. Notes},
{\bf 29}:5 (1981), 382--388.

\end{Biblio}

\end{document}